\documentclass[10pt]{article}

\author{Patrick Dondl, Martin Jesenko}
\title{Threshold phenomenon for homogenized fronts in random elastic media}
\date{Freiburg, 2019}

\usepackage[a4paper, twoside , hcentering , bindingoffset = 1 cm , scale={0.8,0.8} , vmarginratio=11:11 , headsep=0.8cm]{geometry}
\usepackage{pifont,stmaryrd}
\usepackage[pdftex]{hyperref}
\usepackage{fancyhdr}
\usepackage{amssymb, amsmath, amstext, amsthm}
\usepackage{enumerate}
\usepackage{eucal}
\usepackage{mathrsfs}
\usepackage{aurical}
\usepackage[T1]{fontenc}

\usepackage{pictexwd,dcpic}
\usepackage{graphicx}
\pagestyle{fancy}

\setlength{\parindent}{0ex}
\setlength{\parskip}{1ex}

\headheight=15pt
\fancyhead[LO]{\bfseries \nouppercase{\rightmark}}
\fancyhead[RO]{\thepage }
\fancyfoot[CO]{}
\fancyhead[RE]{\bfseries \nouppercase{\leftmark}}
\fancyhead[LE]{\thepage }
\fancyfoot[RE]{}
\fancyfoot[CE]{}




\theoremstyle{plain}
\newtheorem{theo}{Theorem}[section]

\newtheorem{prop}[theo]{Proposition}

\theoremstyle{definition}
\newtheorem{ass}[theo]{Assumption}

\theoremstyle{remark}


\def\ds {\displaystyle}

\def\prostor{\, \underline{\phantom{x}} \,}

\newcommand{\Laplace}{\Delta}

\def\x {\ dx}
\def\y {\ dy}

\def\R {\mathbb R}

\def\N {\mathbb N}
\def\Z {\mathbb Z}

\def\i {\infty}

\def\* {$*$}
\def\eps {\varepsilon}

\def\Re{\mathop{\rm Re}}

\usepackage{color}

\begin{document}
\maketitle

\section{Introduction}
We are investigating the behaviour of solutions $u\colon \R^n\times [0,\infty) \to \R$ of the fractional semilinear partial differential equation
\begin{eqnarray} 
- ( - \Laplace )^{s} u(x,t) - f( x , u(x,t) ) + F & = & \partial_{t} u(x,t) , \label{eq:model} \\
u(x,0) & = & U(x), \nonumber
\end{eqnarray}
for a given suitable initial condition $ U \in C^{2}( \R^{n} ) $. 
As usual, by $  ( - \Laplace )^{s} $ we denote the fractional Laplacian operator (in space variables):
If a function $v$ is such that $ v - \ell $ is sublinear for some linear function $ \ell $, then (in the spirit of e.g.~\cite{MonneauPatrizi1})
\[ - ( - \Laplace )^{s} v(x)
:= \lim_{ r \searrow 0 } \int_{ B_{1/r}(0) \setminus B_{r}(0) } \frac{ v(x+h) - v(x) }{ |h|^{n + 2s} } \ dh. \]

This equation models the evolution of an interface given by the graph $(x,u(x,t))$ of the function $u$ in a heterogeneous environment and with nonlocal interaction. The heterogeneity of the medium is given by the function $f$, which is evaluated at the interface, i.e., this interaction is assumed to be local. We add a constant, external driving force $F\ge 0$. Our main interest lies in the case where the nonlinearity $f$ is given by a random distribution of localized obstacles, to be specified more precisely later.

In this setting, we construct a stationary supersolution, i.e., a stationary (random) function $v$ such that
\begin{eqnarray*} 
- ( - \Laplace )^{s} v(x,t) - f( x , v(x,t) ) + F^* & \le & 0 \\
v(x) & > & U(x).
\end{eqnarray*}
for suitable $U$ and positive, but sufficiently small $F^*$. We note that---due to the comparison principle for the evolution equation---this yields that
\[
u(x,t) \le v(x) \quad \text{for all $t\ge 0$},
\]
i.e., the function $v$ acts as a barrier for propagating interfaces, thus yielding a hysteresis effect.

We note that equations of the form of~\eqref{eq:model}, especially for $s=\frac{1}{2}$ arise in a large number of physical systems. This is due to the fact that the half-Laplacian $- ( - \Laplace )^{\frac{1}{2}}$ arises as the variation of the square of the $H^{\frac{1}{2}}$-seminorm, which is nothing but the Dirichlet energy of a harmonic function on an extension domain with given boundary condition.

Some notable examples have been discussed in~\cite{DondlScheutzowThrom, Throm} (where the problem has been investigated for the case $n=1$) and include the propagation of a wetting line on a rough surface~\cite{wetting} and the propagation of crack fronts in rough media~\cite{schmittb1, schmittb2}. In higher space dimensions, eqation~\eqref{eq:model} describes (in a shallow interface limit), the propagation of twin boundaries in elastic solids~\cite{DondlBhattacharya}. The understanding of pinning, i.e., the existance of suitable stationary supersolutions to the evolution equation is essential for the understanding of precipitate hardening in TWIP steel~\cite{courtedonbhat}.

The remainder of the article is organized as follows:
%

After introducing the setting, we find in Section~\ref{sect:LFS} a local flat solution. 
More precisely, we look for a solution of the Dirichlet problem for the fractional Laplacian on a ball.
The solution is radially increasing and has the appropriate fractional Laplacian in the interior of a ball
with zero values on its complement.

In Section~\ref{sect:decomposition} we decompose the half space above the zero plane 
and thus control the probabilities of existence of (enough strong) obstacles.
By a appropriate transformation this also applies to the case where the initial plane is tilted.
We define a flat solution simply as a minimum of local flat solutions.

Section~\ref{sect:percolation} is devoted to a percolation result that supplies sublinearly increasing function.

Next we lift the local flat solution accordingly to the positions of obstacles. 
For the non-zero initial values, we just add the initial function $ x \mapsto U(x) $.

In Section~\ref{sect:scaling} we summarize all the conditions and see that they can be met.
We clearly state our main result in Theorem~\ref{theo:main-finite}.

As a consequence, for the case $ s = \frac{1}{2} $ we gain an insight into the behaviour if we shrink the setting by some $ \eps \ll 1 $. This homogenization result in the pinning regime is stated in Theorem~\ref{theo:homogenization}.

\section{Setting}
Now let us precisely state assumptions on our random field of obstacles.
We suppose that the obstacles have the same shape and random positions and strenghts
 and that the obstacles do not depend on time, i.e., we are in a quenched setting.
The force of the obstacle field is a random function
\[ f : \R^{n} \times \R \times \Omega \to \R, 
\quad
f( x , y, \omega ) := \sum_{i=1}^{\i} f_{i}( \omega ) \varphi( x - x_{i}( \omega ), y - y_{i}( \omega ) ), \]
where $\Omega$ is a probability space. The function $f$ is assumed to satisfy the following hypothesis.
\begin{ass}
\label{ass:1}
\begin{enumerate}
\item[]
\item 
Shape of obstacles: Function $ \varphi $ belongs to $ C_{c}^{\i}( \R^{n} \times \R ) $ and satisfies
\[ \varphi(x,y) \ge 1 \mbox{ for } \| (x, y) \|_{\i} \le r_{0}
\quad \mbox{and} \quad
\varphi(x,y) = 0 \mbox{ for } \| (x, y) \| \ge r_{1} \]
for some $ r_{0}, r_{1} > 0 $ with $ r_{1} > \sqrt{n} r_{0} $.
\item 
Obstacle positions: $ \{ ( x_{i} , y_{i} ) \}_{ i \in \N } $ are distributed 
according to an $ (n + 1) $-dimensional Poisson point process on $ \R^{n} \times [ r_{1} , \i ) $ 
with intensity $ \lambda > 0 $.
\item 
Obstacle strengths: $ \{ f_{i} \}_{ i \in \N } $ are independent and identically distributed strictly positive random variables
($ f_{i} \sim f_{0} $ for all $ i \in \N $) that are independent of $ \{ ( x_{i} , y_{i} ) \}_{ i \in \N } $ .
\end{enumerate}
\end{ass}
%
As already mentioned this problem was already solved for $ n = 1 $ and for the zero initial value:
\begin{theo}[\cite{DondlScheutzowThrom}]
Suppose that $ n = 1 $ and that Assumption~\ref{ass:1}~is satisfied.
There then exist a deterministic $ F_{*} > 0 $ 
and a continuous random function $ v : \R \times \Omega \to [0, \i) $ with the property that the function
$ \overline{v}( x, t, \omega ) := \min \{ F_{*}t , v(x,\omega) \} $ is a viscousity supersolution to the
evolution problem 
\begin{eqnarray*} 
- ( - \Laplace )^{s} u(x,t,\omega) - f( x , u(x,t,\omega) , \omega ) + F & = & \partial_{t} u(x,t,\omega),  \\
u(x,0,\omega) & = & 0,
\end{eqnarray*}
for $ F \le F_{*} $ and for almost every $ \omega \in \Omega $.
Furthermore, we can choose $v$ such that there exist constants $ C > 0 $ and $ q > 0 $
so that for any $ x \in \R $ we have $ \mathbf{P}( v(x) > h ) \le C e^{-qh}$, i.e.~the height of the pinned
interface admits an exponential tail in its distribution. In particular, for any $ x \in \R $,
the expected value of the height of the pinned interface satisfies $ \mathbf{E}( v(x) ) < \beta $ for
some fixed $ \beta < \i $, depending only on the deterministic parameters of the obstacle
distribution and on $s$.
\end{theo}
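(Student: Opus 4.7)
The plan is to construct the stationary profile $v$ as a pointwise infimum of translated copies of a single deterministic bump $W_{R}$, each copy anchored below an obstacle strong enough to carry the driving force, and then to deduce the evolution supersolution property for $\overline{v}(x,t) := \min\{F_{*}t, v(x)\}$ from the comparison principle together with the standard fact that viscosity infima of stationary supersolutions remain supersolutions.

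The first ingredient is a \emph{local flat solution}. Fix a radius $R \le r_{0}$ and solve the fractional Dirichlet problem $-(-\Laplace)^{s}W_{R} = F_{*}$ on $(-R,R)$, $W_{R}\equiv 0$ elsewhere, obtaining a radial, nonpositive, continuous function; the explicit Riesz--Green kernel gives $|W_{R}(0)| \sim F_{*} R^{2s}$, and I would choose $F_{*}$ small enough that $|W_{R}(0)| \le r_{0}$. Shifting this bump below an obstacle via $v_{i}(x) := y_{i} + W_{R}(x-x_{i})$ then keeps $(x, v_{i}(x))$ inside the cube where $\varphi(\cdot-x_{i},\cdot-y_{i}) \geq 1$, so for $f_{i} \ge 2F_{*}$ the function $v_{i}$ is a pointwise stationary supersolution: inside the interval $-(-\Laplace)^{s}v_{i} \equiv F_{*}$ and $f(\cdot,v_{i}) \ge f_{i} \ge 2F_{*}$, while outside $v_{i}$ attains its maximal value $y_{i}$ and hence $-(-\Laplace)^{s}v_{i} \le 0$ trivially.

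The second ingredient is the probabilistic selection of anchoring obstacles. By independent thinning, the sub-process of obstacles with $f_{i} \ge 2F_{*}$ is Poisson on $\R \times [r_{1}, \i)$ with intensity $\lambda' := \lambda\, \mathbf{P}(f_{0} \ge 2F_{*}) > 0$ as long as $F_{*}$ lies strictly below the essential supremum of $f_{0}$. I would partition $\R$ into intervals of length $2R$ and record, in each column above such an interval, the lowest strong obstacle; its vertical position is exponentially distributed with rate $2R\lambda'$ by Poisson statistics, and heights in disjoint columns are independent. The covering property that every $x\in\R$ lies in some $B_{R}(x_{i_{k}})$ holds once the lowest point in each column is selected, and a column Borel--Cantelli step of the type announced in Section~\ref{sect:percolation} then produces an almost-sure sublinear envelope for the sequence of selected heights, along with an exponential tail bound for the maximum selected height in any $O(1)$ neighborhood.

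Setting $v(x) := \inf_{k}\bigl(y_{i_{k}} + W_{R}(x-x_{i_{k}})\bigr)$ over these selected obstacles, the covering property makes every $x$ lie in some $B_{R}(x_{i_{k}})$, so the viscosity-infimum rule propagates the stationary supersolution inequality from each $v_{i_{k}}$ to $v$; the exponential tail on the selected heights transfers directly to $\mathbf{P}(v(x) > h) \le Ce^{-qh}$ and hence to $\mathbf{E}(v(x)) < \beta$. Finally, $\overline{v} = \min\{F_{*}t, v\}$ is a viscosity supersolution of the evolution equation because $F_{*}t$ is (using $F \le F_{*}$ and $f \ge 0$), and comparison gives $u \le \overline{v}$. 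I expect the sharpest step to be the joint tuning inside the probabilistic construction: one must simultaneously take $R$ small enough that $|W_{R}(0)| \le r_{0}$ and that $2R$-columns are narrow, yet keep the thinned intensity $\lambda'$ large enough that every column is good with high probability and the union of the selected ball supports covers $\R$ with exponentially small gaps. The feasibility window for these coupled constraints is precisely what forces $F_{*}$ to be strictly positive but bounded by a problem-dependent threshold.
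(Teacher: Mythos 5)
Your plan has the right architecture (a local bump, shifted to each obstacle, then an infimum and a comparison step), but the construction of the local profile $W_{R}$ contains a gap that breaks the supersolution property exactly where it is most needed, namely away from the obstacles. You solve $-(-\Laplace)^{s}W_{R}=F_{*}$ on $(-R,R)$ with $R\le r_{0}$ and $W_{R}\equiv 0$ outside, and argue that outside $B_{R}(x_{i})$ the shifted bump $v_{i}=y_{i}+W_{R}(\cdot-x_{i})$ satisfies $-(-\Laplace)^{s}v_{i}\le 0$ because it sits at its maximum there. But that gives only $-(-\Laplace)^{s}v_{i}-f(\cdot,v_{i})+F_{*}\le 0-0+F_{*}=F_{*}>0$ off the obstacle (where $f$ may vanish), so $v_{i}$ is \emph{not} a stationary supersolution on $\R\setminus B_{R}(x_{i})$; in fact $-(-\Laplace)^{s}v_{i}(x)\to 0$ as $|x-x_{i}|\to\infty$, so the driving force $F_{*}$ is not compensated at all far from the bump. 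The viscosity ``infimum of supersolutions'' rule therefore cannot be invoked, and the covering property you derive does not save it: at a point $x$ the infimum in $v(x)=\inf_{k}v_{i_{k}}(x)$ may be attained by a bump centred far away whose floor $y_{i_{k}}$ is unusually low, and this minimising $v_{i_{k}}$ is constant (hence not a supersolution) near $x$. This is precisely why the paper builds the local profile with a two-zone source, $-(-\Laplace)^{s}u_{\rm local}=F_{1}$ on $B_{r_{0}}$ and $=-F_{2}$ on $B_{R}\setminus B_{r_{0}}$: the negative annular source supplies the ``discharge'' that dominates $F^{*}$ off the obstacle, which is the ingredient your $W_{R}$ is missing.

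A secondary concern is the probabilistic step. Selecting the lowest strong obstacle in each $2R$-column and invoking a Borel--Cantelli argument gives an exponential tail for one column, but the construction additionally requires uniform control of the \emph{differences} of selected heights between columns at all scales (the $|y_{a}-y_{b}|\le H(\|a-b\|_{1})$ property of the Lipschitz-percolation theorem the paper quotes). Without that, the infimum of bumps anchored at wildly different heights, or equivalently a lifted flat profile, can have a fractional Laplacian that is not uniformly small, and the nonlocality means that a single low anchor can dominate the infimum on a large region. The paper's argument (both in the cited $n=1$ reference, where a periodic construction and a monotonicity lemma handle intersections, and in the present higher-dimensional version, where the decomposition, Lipschitz percolation and a smooth lifting function replace them) is designed to keep these two effects simultaneously under control; your proposal would need to be rebuilt around a two-sign local source and the percolation height bound before the feasibility window you describe at the end can actually be opened.
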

Dealing with a non-local operator, the authors choose to use periodic functions.
We will not generalize their approach to higher dimensions here 
since it seems technically difficult to show the analogous monotonicity properties and understand intersections of local solutions.
Our approach is in fact closer to the one in the proof of analogous result in the local setting.
\begin{theo}[\cite{DirrDondlScheutzow}]
If Assumption~\ref{ass:1}~is satisfied, 
then there exists $ F_{*} > 0 $ and a non-negative $ v : \R^{n} \times \Omega \to [ 0 , \i ) $
so that 
\[ 0 \ge \Laplace v(x, \omega) - f( x, v(x,\omega), \omega ) + F_{*} \]
almost surely. 
\end{theo}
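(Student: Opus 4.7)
The plan is to construct $v$ as the pointwise infimum of a family of translated and lifted ``cap'' profiles, one attached to each sufficiently strong obstacle in a subset selected by a probabilistic covering argument. This mirrors the strategy that the paper will later employ for the fractional operator, with the simplification that the locality of $\Delta$ removes the need for controlling sublinear growth of obstacle heights.

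First I would construct the local flat solution. Fix $R>0$ and $F_{*}>0$ subject to the constraint $F_{*}R^{2}\le 2nr_{1}$, and let $w\colon \overline{B_{R}(0)}\to[0,\infty)$ be the radial solution of $\Delta w=-F_{*}$ on $B_{R}(0)$ with $w=0$ on $\partial B_{R}(0)$, explicitly $w(\xi)=\tfrac{F_{*}}{2n}(R^{2}-|\xi|^{2})$. Next, pick $f_{*}>0$ with $p:=\mathbf{P}(f_{0}\ge f_{*})>0$; by the marking theorem the \emph{strong} obstacles $\{(x_{i},y_{i}):f_{i}\ge f_{*}\}$ form a Poisson process on $\R^{n}\times[r_{1},\infty)$ of intensity $\lambda p>0$. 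For each strong index $i$ I define the lifted cap
\[
v_{i}(x):=y_{i}-\tfrac{F_{*}}{2n}|x-x_{i}|^{2}\text{ on }\overline{B_{R}(x_{i})},\qquad v_{i}:=+\infty\text{ off }B_{R}(x_{i}).
\]
Each $v_{i}$ satisfies $\Delta v_{i}=-F_{*}$ classically on $B_{R}(x_{i})$ and, thanks to $y_{i}\ge r_{1}$ and the constraint on $F_{*}R^{2}$, is nonnegative on its support.

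Now I would set $v(x,\omega):=\inf_{i\text{ strong}}v_{i}(x)$. The probabilistic content is to verify that, almost surely, every $x\in\R^{n}$ lies in some $\overline{B_{R}(x_{i})}$ with $i$ strong, and that the infimum is attained at $x$ by some such $i$. Both facts rest on the observation that integrating out the unbounded $y$-coordinate makes the projection of the strong process onto $\R^{n}$ of infinite intensity, so every fixed ball in $\R^{n}$ almost surely contains infinitely many strong obstacles; a standard argument on a countable dense set, together with monotonicity of the covering event in $R$, promotes this to simultaneous coverage, while for attainment one notes that the Poisson process restricted to $\overline{B_{R}(x)}\times[r_{1},M]$ has finite total intensity for every $M<\infty$, so only finitely many $v_{i}(x)$ can realize values close to the infimum.

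Finally, each $v_{i}$ is a classical supersolution of $\Delta v+F_{*}\le f(x,v)$ on $B_{R}(x_{i})$ (here $\Delta v_{i}+F_{*}=0$ and we use $f\ge 0$), and the standard fact that the pointwise infimum of classical supersolutions is a viscosity supersolution yields the claim globally for $v$. The delicate step, and the one I expect to be the main obstacle, is the covering argument: one must carefully exploit the unboundedness of the $y$-marginal to rule out uncovered points across all of $\R^{n}$ simultaneously, and to secure attainment of the infimum --- the simplicity of the profile $v_{i}$ and the independence structure of the Poisson marks are what make this step tractable. Once coverage is in hand, the remainder is routine viscosity theory, and the final choice $F_{*}$ is fixed as any value satisfying $F_{*}R^{2}\le 2nr_{1}$ for the chosen $R$.
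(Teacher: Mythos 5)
Your construction has a fundamental gap at the boundary of each ball, and the root cause is that the cap profiles never actually use the obstacle force. Each $v_i$ satisfies $\Delta v_i=-F_*$ on \emph{all} of $B_R(x_i)$, including the inner region where the obstacle sits, so the obstacle strength plays no role beyond the trivial observation $f\ge 0$. But a nonnegative global viscosity supersolution of $\Delta v\le -F_*$ cannot exist: on any ball $B_R(0)$, comparison with the paraboloid solving $\Delta w=-F_*$, $w=v$ on $\partial B_R$, gives $v(0)\ge \tfrac{F_*}{2n}R^2$, which diverges as $R\to\infty$. Consequently, whatever $v=\inf_i v_i$ is, it must ``pay back'' somewhere, and it does so precisely at the ball boundaries $\partial B_R(x_i)$: there $v_i$ has a nonzero inward gradient and is declared $+\infty$ outside, so at any point $x\in\partial B_R(x_i)$ where $v_i$ is the unique minimizer (e.g.\ near the globally lowest obstacle, which a.s.\ exists since $y_i\ge r_1$ has a positive infimum), the infimum has a downward kink or a genuine local minimum. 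There a smooth test function touching from below (a constant, say) gives $\Delta\phi(x)-f(x,v(x))+F_*= -f(x,v(x))+F_*=F_*>0$, violating the supersolution inequality, because such a boundary point lies in the obstacle-free region so $f(x,v(x))=0$. The assertion that ``the pointwise infimum of classical supersolutions is a viscosity supersolution'' is only valid when every member is a supersolution on a common open set; here each $v_i$ is a supersolution only on its own ball, and the union-of-domains situation is exactly where that principle breaks.

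The fix, and the actual mechanism in \cite{DirrDondlScheutzow} (mirrored by Section~\ref{sect:LFS} of this paper in the fractional setting), is to build the local profile from a \emph{sign-changing} right-hand side: $\Delta u = F_1>0$ inside the obstacle ball $B_{r_0}$, paid for by the obstacle strength $S\ge F_1+F_*$, and $\Delta u=-F_2<0$ on the annulus $B_R\setminus B_{r_0}$, with $u=0$ \emph{and} $\partial_\nu u=0$ on $\partial B_R$ achieved by tuning the parameters. The zero normal derivative is what makes the extension by zero (or the infimum over translates) $C^1$, so that the only nonsmooth points are concave intersections of two local profiles, which are harmless for supersolutions. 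Your coverage heuristic (integrating out the $y$-marginal to get infinite planar intensity of strong obstacles) is a reasonable starting point, but on its own it does not control the \emph{heights} of the covering obstacles; the reason \cite{DirrDondlScheutzow} (and this paper) invoke a Lipschitz-percolation result of the type in Theorem~\ref{theo:percolation} is precisely to select, for every base cell, an obstacle whose height differs only sublinearly from its neighbors, which is then what allows the lifting of the flat profile without destroying the $C^1$ matching. Without that, adjacent caps can sit at wildly different heights and the boundary-kink problem reappears.
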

Again, due to the comparison principle, such a supersolution blocks any propagating solution that starts below, and exponential tail estimates hold.
Cleary, the fact that here were are dealing with a non-local operator requires a special attention
starting already by introducing an appropriate concept of the Dirichlet problem.

\section{Local flat supersolution} \label{sect:LFS}
Following the idea in \cite{DirrDondlScheutzow}, 
we first construct a local supersolution inside some ball.
We suppose that in the center of this ball, there lies an obstacle of a sufficient strength.
The remaining part of this ball may or may not contain any other obstacle.
The main difference is that here we are dealing with the fractional Laplacian, which is a non-local operator. 
A way of formulating the Dirchlet problem on some open subset of $ \R^{n} $ is 
to prescribe values not only on the boundary but on the entire complement. 
Let us mention that there exists also an alternative non-equivalent notion of the Dirichlet problem 
via spectrum, see e.g.~\cite{ServadeiValdinoci}.

Due to the properties of the fractional Laplacian, we may restrict ourselves to balls centered at the origin.
Our task is to find a function $u$ such that 
\[ - ( - \Laplace )^{s} u(x) \le f( x , u(x) ) - F. \]
Hence, $ - ( - \Laplace )^{s} u $ may be positive inside an obstacle, 
i.e.~in a small concentric ball with radius $ r_{0} $,
and must be negative on its complement.
Therefore, let us first for some $ R > r_{0} $ and $ F_{1} , F_{2} $ constract a radial solution to
\begin{eqnarray*} 
- ( - \Laplace )^{s} u(x) & = &  
\left\{ \begin{array}{cl} 
F_{1}, & \mbox{if } |x| < r_{0}, \\
- F_{2}, & \mbox{if } r_{0} < |x| < R, 
\end{array} \right. \\
u(x) & = & 0 \quad \mbox{if } |x| \ge R. 
\end{eqnarray*}
%

Green's function for the ball $ B_{R}(0) \subset \R^{n} $, $ n \ge 2 $, i.e.~the distributional solution of
\[ - ( - \Laplace )^{s} u(x) + \delta_{y}(x) = 0 \quad \forall x \in B_{R}(0) \]
with $ u(x) = 0 $ for $ x \not\in B_{R}(0) $, is
\[ G_{n,s}(x,y) 
= \frac{ \Gamma( \frac{n}{2} ) }{ 2^{2s} \pi^{1/n} \Gamma(s)^{2} }
	\frac{1}{ |x-y|^{n-2s}  } \Phi_{n,s}(x,y)
\]
where
\[ \Phi_{n,s}(x,y) = \int_{0}^{ \zeta } \frac{1}{ w^{1-s} } \frac{1}{ (1+w)^{n/2} } \ dw \]
and
\[ \zeta = \frac{1}{ R^{2} } \frac{ ( R^{2} - |x|^{2} ) ( R^{2} - |y|^{2} ) }{ |x-y|^{2} } \]
(see~\cite{Bucur}, Theorem 3.1, or \cite{Pozrikidis}, p.~249--250).
Using the Euler type integral expression for the hypergeometric function $ {}_{2} F_{1} $
for $ \Re c > \Re b > 0 $ and $ z \not\in [ 1 , \i ) $
\[ B( b , c-b ) \cdot {}_{2} F_{1}(a,b,c;z) = \int_{0}^{1} x^{b-1} ( 1 - x )^{c-b-1} ( 1 - z x )^{-a} \x, \]
where $B$ is the beta function, we may rewrite
\begin{eqnarray*}
\int_{0}^{ \zeta } \frac{1}{ w^{1-s} } \frac{1}{ (1+w)^{n/2} } \ dw
& = & \int_{0}^{1} \frac{1}{ ( \zeta t )^{1-s} } \frac{1}{ ( 1 + \zeta t )^{n/2} } \ \zeta \ dt \\
& = & \zeta^{s} B( s , 1 ) \cdot {}_{2} F_{1}( \tfrac{n}{2} , s , s+1 ; - \zeta ) \\
& = & \frac{ \zeta^{s} }{s} \cdot {}_{2} F_{1}( \tfrac{n}{2} , s , s+1 ; - \zeta ).
\end{eqnarray*}
For given $ F_{1} , F_{2} > 0 $, the solution of
\begin{eqnarray*} 
- ( - \Laplace )^{s} u(x) & = &  
\left\{ \begin{array}{cl} 
F_{1}, & \mbox{if } |x| < r_{0}, \\
- F_{2}, & \mbox{if } r_{0} < |x| < R, 
\end{array} \right. \\
u(x) & = & 0 \quad \mbox{if } |x| \ge R, 
\end{eqnarray*}
exists and lies in $ C^{0,s}( \R^{n} ) $ (Proposition 1.1 in \cite{ROS}).
It is given by
\[ u(x) 
= F_{2} \int_{ B_{R}(0) } G_{n,s}(x,y) \y 
- ( F_{1} + F_{2} ) \int_{ B_{ r_{0} }(0) } G_{n,s}(x,y) \y. \]
Namely, although our source is not smooth in $ B_{R}(0) $, 
it can be approximated from below and from above with smooth functions.
For them, the solution may be computed with Green's function (e.g.~Theorem 3.2 in \cite{Bucur}).
Then we apply the comparison principle, see \cite{RO-1} and the references therein.

Let $ r_{0} = q R $ with $ q \in (0,1) $.
We would like to explore the interplay of these two integrals 
and find an appropriate scaling for $ F_{1}, F_{2}, q $ 
in order for the solution to be non-positive and monotonically increasing away from the origin.

The first integral is known: It is the solution for an uniform source (compare \cite{Getoor})
\[ g(x)
:= \int_{ B_{ R }(0) } G_{n,s}(x,y) \y 
= \frac{ \Gamma( \frac{n}{2} ) }{ 2^{2s} \Gamma( \frac{n}{2} + s ) \Gamma( 1+s ) } ( R^{2} - |x|^{2} )^{s}. \]
Let us denote
\[ b(x) := \int_{ B_{ r_{0} }(0) } G_{n,s}(x,y) \y. \]
From the assessments
\[ \int_{0}^{ \zeta } \frac{1}{ w^{1-s} } \frac{1}{ (1+w)^{n/2} } \ dw 
\ge  \int_{0}^{ \zeta } \frac{1}{ w^{1-s} } \frac{1}{ (1 + \zeta )^{n/2} } \ dw 
= \frac{ \zeta^{s}}{ (1 + \zeta )^{n/2} s } \]
and 
\begin{eqnarray*}
( 1 + \zeta ) |x-y|^{2}
&  =  & |x-y|^{2} + \frac{ ( R^{2} - |x|^{2} ) ( R^{2} - |y|^{2} ) }{ R^{2} } \\
&  =  & |x|^{2} - 2 x \cdot y + |y|^{2} + R^{2} - |x|^{2} - |y|^{2} + \frac{ |x|^{2} |y|^{2} }{ R^{2} } \\
&  =  & - 2 x \cdot y + R^{2} + \frac{ |x|^{2} |y|^{2} }{ R^{2} } \\
& \le & \frac{ 2 R^{2} |x| |y| + R^{4} + |x|^{2} |y|^{2} }{ R^{2} } \\
&  =  & \frac{ ( R^{2} + |x| |y| )^{2} }{ R^{2} },
\end{eqnarray*} 
it follows
\begin{eqnarray*}
\frac{ \Phi_{n,s}(x,y) }{ |x-y|^{n-2s}  }
& \ge & \frac{ \zeta^{s}}{ (1 + \zeta )^{n/2} s |x-y|^{n-2s} } \\
&  =  & \frac{ ( \zeta |x-y|^{2} )^{s} }{ s ( ( 1 + \zeta ) |x-y|^{2} )^{n/2} } \\
& \ge & \frac{ \left( \frac{ ( R^{2} - |x|^{2} ) ( R^{2} - |y|^{2} ) }{ R^{2} } \right)^{s} }
		{ s  \left( \frac{ ( R^{2} + |x| |y| )^{2} }{ R^{2} } \right)^{n/2}  }  \\
&  =  & \frac{  R^{n-2s} }{s} \frac{ ( R^{2} - |x|^{2} )^{s} ( R^{2} - |y|^{2} )^{s} }{ ( R^{2} + |x| |y| )^{n} }.  
\end{eqnarray*} 
Thus, for $ |y| < r_{0} $
\[ \frac{ \Phi_{n,s}(x,y) }{ |x-y|^{n-2s}  }
\ge \frac{  R^{n-2s} }{s}  \frac{ ( R^{2} - |x|^{2} )^{s} ( R^{2} - r_{0}^{2} )^{s} }{ ( R^{2} + R r_{0} )^{n} }
\ge \frac{1}{s} \frac{ ( R^{2} - |x|^{2} )^{s} ( 1 - q^{2} )^{s} } { ( 1 + q )^{n} R^{n} }. \]
Hence,
\begin{eqnarray*}
\int_{ B_{ q R }(0) } G_{n,s}(x,y) \y
& \ge & \frac{ \Gamma( \frac{n}{2} ) }{ 2^{2s} \pi^{1/n} \Gamma(s)^{2} s }
		\frac{ ( 1 - q^{2} )^{s} }{ ( 1 + q )^{n} R^{n}  } 
		| B_{ q R }(0) | ( R^{2} - |x|^{2} )^{s}  \\
&  =  & \frac{ \Gamma( \frac{n}{2} ) }{ 2^{2s} \pi^{1/n} \Gamma(s)^{2} s }
		\frac{ ( 1 - q^{2} )^{s} }{ ( 1 + q )^{n} } 
		q^{n} \frac{ \pi^{n/2} }{ \Gamma( \frac{n}{2} + 1 ) }( R^{2} - |x|^{2} )^{s}  \\	
&  =  & \frac{ ( 1 - q^{2} )^{s} }{ ( 1 + q )^{n} } q^{n}
		\cdot \frac{ \pi^{ n/2 - 1/n } \Gamma( \frac{n}{2} + s ) }{ \Gamma( \frac{n}{2} + 1 ) \Gamma(s) }
		\cdot \frac{ \Gamma( \frac{n}{2} ) }{ 2^{2s} \Gamma( \frac{n}{2} + s ) \Gamma( s + 1 ) } ( R^{2} - |x|^{2} )^{s}. 
\end{eqnarray*}
Since we are considering the case $ n \ne 1 $,
\[ \frac{ \pi^{ n/2 - 1/n } \Gamma( \frac{n}{2} + s ) }{ \Gamma( \frac{n}{2} + 1 ) \Gamma(s) }
= \frac{ \pi^{ n/2 - 1/n } }{ \frac{n}{2} B( \frac{n}{2} , s ) }
\ge \frac{2s}{n}. \]
%
Therefore,
\[ b(x) \ge \frac{ ( 1 - q^{2} )^{s} }{ ( 1 + q )^{n} } q^{n} \frac{2s}{n} g(x). \]
Hence,
if
\[ \frac{ F_{1} + F_{2} }{ F_{2} } \ge \frac{n}{2s} \frac{ ( 1 + q )^{n} }{ ( 1 - q^{2} )^{s} } \frac{1}{ q^{n} }, \]
then $ u(x) < 0 $ for every $ x \in B_{R}(0) $. 

Now, we would also like $u$ to be monotonically increasing (in the radial direction away from the origin). 
We already know that
\[ \nabla g(x)
= \frac{ \Gamma( \frac{n}{2} ) }{ 2^{2s} \Gamma( \frac{n}{2} + s ) \Gamma( 1+s ) } ( -2 s x )( R^{2} - |x|^{2} )^{s-1}. \]
Therefore, let us look at
\[ - \nabla b(x) 
= \int_{ B_{ q R }(0) } - \nabla_{x} G_{n,s}(x,y) \y. \]
First,
\begin{eqnarray*}
\nabla_{x} \frac{ \Phi_{n,s}(x,y) }{ |x-y|^{n-2s}  }
&  =  & ( 2 s - n ) \frac{ x - y }{ |x-y|^{n-2s+2} } \Phi_{n,s}(x,y)
		+ \frac{1}{ |x-y|^{n-2s} } \frac{1}{ \zeta^{1-s} } \frac{1}{ (1+\zeta)^{n/2} } \nabla_{x} \zeta \\
&  =  & ( 2 s - n ) \frac{ x - y }{ |x-y|^{2} } \frac{ \Phi_{n,s}(x,y) }{ |x-y|^{n-2s}  }
		+ \frac{1}{ |x-y|^{n-2s} } \frac{1}{ \zeta^{1-s} } \frac{1}{ (1+\zeta)^{n/2} } \nabla_{x} \zeta.	
\end{eqnarray*} 
Since
\[ \nabla_{x} \zeta
= \frac{ R^{2} - |y|^{2} }{ R^{2} } 
\left( \frac{ - 2 x }{ | x - y |^{2} } - ( R^{2} - |x|^{2} ) \cdot 2 \frac{ x - y }{ | x - y |^{4} } \right) 
\]
and
\[ 1 + \zeta
= 1 + \frac{ ( R^{2} - |x|^{2} ) ( R^{2} - |y|^{2} ) }{ R^{2} |x-y|^{2} }
= \frac{ R^{4} - 2 R^{2} x \cdot y + |x|^{2} |y|^{2} }{ R^{2} |x-y|^{2} }, \]
the last term equals
\begin{align*}
& \frac{1}{ |x-y|^{n-2s} } \frac{1}{ \zeta^{1-s} } \frac{1}{ (1+\zeta)^{n/2} } \nabla_{x} \zeta \\
& = \frac{1}{ |x-y|^{n-2s} } 
	\frac{ ( R^{2} - |x|^{2} )^{s-1} ( R^{2} - |y|^{2} )^{s-1} }{ R^{2(s-1)} |x-y|^{2(s-1)} }
	\frac{ R^{n} |x-y|^{n} }{  ( R^{4} - 2 R^{2} x \cdot y + |x|^{2} |y|^{2} )^{n/2} } \nabla_{x} \zeta \\
& = |x-y|^{2}  
	\frac{ ( R^{2} - |x|^{2} )^{s-1} ( R^{2} - |y|^{2} )^{s} R^{n} }
	{ R^{2s} ( R^{4} - 2 R^{2} x \cdot y + |x|^{2} |y|^{2} )^{n/2} } 
	\left( \frac{ - 2 x }{ | x - y |^{2} } - ( R^{2} - |x|^{2} ) \cdot 2 \frac{ x - y }{ | x - y |^{4} } \right).
\end{align*}
%
%
%
Hence,
\begin{equation}
- \nabla_{x} \frac{ \Phi_{n,s}(x,y) }{ |x-y|^{n-2s}  } 
= 2 x ( R^{2} - |x|^{2} )^{s-1} 
		\frac{ ( R^{2} - |y|^{2} )^{s} R^{n-2s} }{ ( R^{4} - 2 R^{2} x \cdot y + |x|^{2} |y|^{2} )^{n/2} } + 
		\frac{ x - y }{ | x - y |^{2} } P(y).
\label{eq:star}
\end{equation} 
with
\[ P(y) := ( n - 2 s ) \frac{ \Phi_{n,s}(x,y) }{ |x-y|^{n-2s}  }
		+ 2 \frac{ ( R^{2} - |x|^{2} )^{s} ( R^{2} - |y|^{2} )^{s} R^{n-2s} }
		{ ( R^{4} - 2 R^{2} x \cdot y + |x|^{2} |y|^{2} )^{n/2} }. \]

Due to the radial structure, the gradient in any point $x$ has the radial direction. 
The first term in (\ref{eq:star}) has this direction even for every $ y $.
In the second term, all the ``oscillations'' cancel by integration 
since the functions are symmetric with respect to the axis determined by $x$.

For $ x = 0 $, the integration yields 0.
If $ |x| \ge r_{0} $, the projection of $ x - y $ on $x$ is positive for every $ y \in B_{ r_{0} }(0) $.
Therefore, the second term in (\ref{eq:star}) yields a vector in the (positive) direction of $x$.

Finally, let us show that the same holds for $ x \in B_{ r_{0} }(0) $. 
Namely, let $ y_{x} := \frac{y \cdot x}{ |x| } $ stand for the signed length of the orthogonal projection of $y$ on $x$
and $ y_{x}^{\perp} := y - y_{x} \frac{x}{ |x| } $.
To every $ y \in B_{ r_{0} }(0) $ with $ y_{x} > |x| $ (whose contribution is in the opposite direction of $x$), 
there is $ \tilde{y} \in B_{ r_{0} }(0) $
with $ | \tilde{y} - x | = | y - x | $,  $ \tilde{y}_{x}^{\perp} = y_{x}^{\perp} $ 
and $ ( \tilde{y} - x )_{x} = - ( y - x )_{x} $.
Explictly, $ \tilde{y} = y - 2 ( y_{x} - |x| ) \frac{x}{ |x| } $.
Moreover, we denote by $ \tilde{ \zeta } $ the corresponding integration boundary.

For the norm it holds
\[ | \tilde{y} |^{2}
= | \tilde{y}_{x} |^{2} + | \tilde{y}_{x}^{\perp} |^{2} 
= | 2 |x| - y_{x} |^{2} + | y_{x}^{\perp} |^{2}
< | y_{x} |^{2} + | y_{x}^{\perp} |^{2}
= |y|^{2}. \]
Therefore, indeed $ \tilde{y} \in B_{ r_{0} }(0) $ and also $ \tilde{ \zeta } > \zeta $.
By rewriting
\begin{eqnarray*}
P(y)
&  =  & ( n - 2 s ) \frac{ \Phi_{n,s}(x,y) }{ |x-y|^{n-2s}  }
		+ 2 \frac{ ( R^{2} - |x|^{2} )^{s} ( R^{2} - |y|^{2} )^{s} R^{n-2s} }
		{ ( R^{4} - 2 R^{2} x \cdot y + |x|^{2} |y|^{2} )^{n/2} } \\
&  =  & \frac{ n - 2 s }{ | x - y |^{n-2s} } \int_{0}^{ \zeta } \frac{1}{ w^{1-s} ( 1 + w )^{n/2} } \ dw
		+ 2 \frac{ \zeta^{s} }{ | x - y |^{n-2s} ( 1 + \zeta )^{n/2} },
\end{eqnarray*}
we arrive at
\begin{eqnarray*}
P( \tilde{y} ) - P(y)
&  =  & \frac{1}{ | x - y |^{n-2s} } \left( 
		( n - 2 s ) \int_{ \zeta }^{ \tilde{ \zeta } } \frac{ w^{s-1} }{ ( 1 + w )^{n/2} } \ dw
		+ 2 \frac{ \tilde{ \zeta }^{s} }{ ( 1 + \tilde{ \zeta } )^{n/2} } - 2 \frac{ \zeta^{s} }{ ( 1 + \zeta )^{n/2} }
		\right) \\
&  =  & \frac{1}{ | x - y |^{n-2s} } \left( 
		( n - 2 s ) \int_{ \zeta }^{ \tilde{ \zeta } } \frac{ w^{s-1} }{ ( 1 + w )^{n/2} } \ dw
		+ 2 \int_{ \zeta }^{ \tilde{ \zeta } } 
		\frac{d}{dw} \frac{ w^{s} }{ (1+w)^{n/2} } \ dw
		\right) \\
&  =  & \frac{1}{ | x - y |^{n-2s} } \left( 
		( n - 2 s ) \int_{ \zeta }^{ \tilde{ \zeta } } \frac{ w^{s-1} }{ ( 1 + w )^{n/2} } \ dw
		+ 2 \int_{ \zeta }^{ \tilde{ \zeta } } 
		\left( \frac{ s w^{s-1} }{ (1+w)^{n/2} } - \frac{n}{2} \frac{ w^{s} }{ (1+w)^{n/2+1} } \right) \ dw
		\right) \\
&  =  & \frac{1}{ | x - y |^{n-2s} } \left( 
		n \int_{ \zeta }^{ \tilde{ \zeta } } \frac{ w^{s-1} }{ ( 1 + w )^{n/2} } \ dw
		- n \int_{ \zeta }^{ \tilde{ \zeta } } \frac{ w^{s} }{ (1+w)^{n/2+1} } \ dw
		\right) \\
&  =  & \frac{n}{ | x - y |^{n-2s} } 
		\int_{ \zeta }^{ \tilde{ \zeta } } \frac{ w^{s-1} }{ ( 1 + w )^{n/2+1} } \ dw \\
& \ge &	0.
\end{eqnarray*}
Hence,
\[ \int_{ B_{ r_{0} }(0) } \frac{ x - y }{ | x - y |^{2} } P(y) \y = \alpha x \]
for some $ \alpha > 0 $. 
Since the contribution of this term to the (radial) derivation is positive, we may neglect it for the lower bound.
Thus
\begin{eqnarray*}
| - \nabla b(x) |
&  =  & \left| \int_{ B_{ r_{0} }(0) } - \nabla_{x} G_{n,s}(x,y) \y \right| \\
& \ge & 2 |x| ( R^{2} - |x|^{2} )^{s-1} \frac{ \Gamma( \frac{n}{2} ) }{ 2^{2s} \pi^{1/n} \Gamma(s)^{2} }
		\int_{ B_{ r_{0} }(0) } 
		\frac{ ( R^{2} - |y|^{2} )^{s} R^{n-2s} }{ ( R^{4} - 2 R^{2} x \cdot y + |x|^{2} |y|^{2} )^{n/2} } \y \\
& \ge & 2 |x| ( R^{2} - |x|^{2} )^{s-1} \frac{ \Gamma( \frac{n}{2} ) }{ 2^{2s} \pi^{1/n} \Gamma(s)^{2} }
		| B_{ r_{0} }(0) | 
		\frac{ ( R^{2} - r_{0}^{2} )^{s} R^{n-2s} }
		{ ( R^{4} + 2 R^{2} R r_{0} + R^{2} r_{0}^{2} )^{n/2} } \\
& \ge & 2 s |x| ( R^{2} - |x|^{2} )^{s-1} \frac{ \Gamma( \frac{n}{2} ) }{ 2^{2s} \pi^{1/n} \Gamma(s)^{2} s }
		\frac{ \pi^{n/2} }{ \Gamma( \frac{n}{2} + 1 ) } \frac{ q^{n} ( 1 - q^{2} )^{s} }{ ( 1 + q )^{n} } \\
& \ge & \frac{2s}{n} \frac{ q^{n} ( 1 - q^{2} )^{s} }{ ( 1 + q )^{n} } | \nabla g(x) |,
\end{eqnarray*}
where we may the same assessment as above for the values of functions.
Therefore, if $ F_{1}, F_{2} $ and $q$ suffice the inequality above,
the solution $u$ has positive (radial) derivation. To summarize
\begin{prop}
\label{prop:local}
Let $ F_{1} , F_{2} > 0 $, $ n \ge 2 $ and $ q, s \in (0,1) $ fulfil 
\[ \frac{ F_{1} + F_{2} }{ F_{2} } \ge \frac{n}{2s} \frac{ ( 1 + q )^{n} }{ ( 1 - q^{2} )^{s} } \frac{1}{ q^{n} }. \]
Then, for any $ R > 0 $ and $ r_{0} := q R $, the solution to
\begin{eqnarray*} 
- ( - \Laplace )^{s} u(x) & = &  
\left\{ \begin{array}{cl} 
F_{1}, & \mbox{if } |x| < r_{0}, \\
- F_{2}, & \mbox{if } r_{0} < |x| < R, 
\end{array} \right. \\
u(x) & = & 0 \quad \mbox{if } |x| \ge R, 
\end{eqnarray*}
fulfils for all $ x \in B_{R}(0) $
\[ u(x) < 0 
\quad \mbox{and} \quad
\nabla u(x) = \alpha( |x| ) x 
\quad \mbox{with } \alpha \ge 0. \]
\end{prop}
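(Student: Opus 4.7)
The plan is to exploit the linearity of the fractional Dirichlet problem and write the solution as a combination of two Riesz-type potentials. Since the right-hand side is piecewise constant,
\[ u(x) = F_{2}\, g(x) - (F_{1}+F_{2})\, b(x), \]
where $g(x) := \int_{B_{R}(0)} G_{n,s}(x,y)\,dy$ is the classical fractional potential of a constant source on $B_{R}(0)$, a known multiple of $(R^{2} - |x|^{2})^{s}$, and $b(x) := \int_{B_{r_{0}}(0)} G_{n,s}(x,y)\,dy$. Both potentials are radial. The core of the argument is to establish the twin pointwise estimates
\[ b(x) \ge C_{n,s,q}\, g(x) \quad\text{and}\quad |\nabla b(x)| \ge C_{n,s,q}\, |\nabla g(x)| \]
with the \emph{same} constant $C_{n,s,q} = \tfrac{2s}{n}\,q^{n}(1-q^{2})^{s}/(1+q)^{n}$. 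The hypothesis on $(F_{1}+F_{2})/F_{2}$ is exactly $(F_{1}+F_{2})\,C_{n,s,q} \ge F_{2}$, so the first estimate immediately gives $u < 0$ on $B_{R}(0)$, and the second, combined with radial symmetry, gives $\nabla u(x) = \alpha(|x|)\, x$ with $\alpha \ge 0$.

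For the value estimate I would bound $\Phi_{n,s}(x,y)/|x-y|^{n-2s}$ from below on the region $|y| < r_{0}$. Using $\int_{0}^{\zeta} w^{s-1}(1+w)^{-n/2}\,dw \ge \zeta^{s}/(s(1+\zeta)^{n/2})$ together with the elementary bound $(1+\zeta)|x-y|^{2} \le (R^{2} + |x|\,|y|)^{2}/R^{2}$ reduces the kernel to $(R^{2}-|x|^{2})^{s}(R^{2}-|y|^{2})^{s}$ times an explicit prefactor. Restricting $|y|\le qR$ and integrating over $B_{r_{0}}$ then contribute the factor $q^{n}(1-q^{2})^{s}/(1+q)^{n}$, and a Gamma-function identity (using $B(n/2,s) = \Gamma(n/2)\Gamma(s)/\Gamma(n/2+s)$ and the fact $n \ge 2$) reduces the remaining prefactors to at least $2s/n$.

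The gradient estimate is the delicate part. Differentiating $G_{n,s}$ in $x$ splits $-\nabla_{x} G_{n,s}(x,y)$ into a manifestly outward radial piece (a positive scalar multiple of $x$) plus a second piece of the form $\frac{x-y}{|x-y|^{2}}\, P(y)$ with $P(y)\ge 0$. After integration in $y \in B_{r_{0}}(0)$ the result is radial by the symmetry of the ball; for $|x|\ge r_{0}$ the projection of $x-y$ onto $x$ is positive for every $y \in B_{r_{0}}$, so the second piece contributes in the outward direction automatically. For $0<|x|<r_{0}$ some $y$ with $y\cdot x/|x| > |x|$ point the wrong way, and here I would pair each such $y$ with its reflection $\tilde y = y - 2(y_{x}-|x|)\,x/|x|$ across the hyperplane $\{z:z\cdot x = |x|^{2}\}$. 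The reflection satisfies $|\tilde y|<|y|$, so $\tilde y \in B_{r_{0}}$ and $\tilde\zeta>\zeta$, and $P(\tilde y) \ge P(y)$ follows by writing the difference as an integral from $\zeta$ to $\tilde\zeta$ and applying the identity $\tfrac{d}{dw}\tfrac{w^{s}}{(1+w)^{n/2}} = \tfrac{s w^{s-1}}{(1+w)^{n/2}} - \tfrac{n}{2}\tfrac{w^{s}}{(1+w)^{n/2+1}}$ to recombine the two summands of $P$ into the manifestly positive $n\int_{\zeta}^{\tilde\zeta} w^{s-1}(1+w)^{-n/2-1}\,dw$. Once the second piece is known to contribute outward, one discards it for the lower bound and reruns the value-estimate chain on the first piece to reproduce the constant $C_{n,s,q}$.

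The main obstacle is this last pairing argument: both estimates must yield \emph{exactly} the same constant, so that the single threshold in the hypothesis on $(F_{1}+F_{2})/F_{2}$ enforces negativity and radial monotonicity simultaneously. Any loss of a prefactor in the gradient bound would force a stricter condition on $F_{1}/F_{2}$, which is why the reflection trick and the recombination identity above are essential rather than merely convenient.
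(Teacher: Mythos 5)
Your proposal reproduces the paper's argument step for step: the same decomposition $u = F_{2}\,g - (F_{1}+F_{2})\,b$, the same kernel lower bound giving $b \ge \tfrac{2s}{n}\,q^{n}(1-q^{2})^{s}(1+q)^{-n}\,g$, the same split of $-\nabla_{x}G_{n,s}$ into an outward radial term plus $\tfrac{x-y}{|x-y|^{2}}P(y)$, and the same reflection $\tilde y = y - 2(y_{x}-|x|)\,x/|x|$ with the recombination identity $n\int_{\zeta}^{\tilde\zeta} w^{s-1}(1+w)^{-n/2-1}\,dw \ge 0$ to show $P(\tilde y)\ge P(y)$. This is correct and takes essentially the same approach as the paper, including the key observation that both the value and gradient bounds must produce the same constant so that a single hypothesis on $(F_{1}+F_{2})/F_{2}$ suffices.
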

We will also need an estimate of the minimal value of this solution $ u(0) $. Since
\[ u(0) 
= F_{2} \int_{ B_{R}(0) } G_{n,s}(0,y) \y 
- ( F_{1} + F_{2} ) \int_{ B_{ r_{0} }(0) } G_{n,s}(0,y) \y
\ge - F_{1} \int_{ B_{ r_{0} }(0) } G_{n,s}(0,y) \y, \]
we must find a good upper bound of $ b(0) = \int_{ B_{ r_{0} }(0) } G_{n,s}(0,y) \y $.
For $ n \ge 2 $ we can bound simply
\[ \int_{0}^{ \zeta } \frac{1}{ w^{1-s} } \frac{1}{ (1+w)^{n/2} } \ dw 
\le  \int_{0}^{1} \frac{1}{ w^{1-s} } \ dw 
+ \int_{1}^{ \i } \frac{1}{ w^{ n/2 + 1 - s } } \ dw 
= \frac{1}{s} + \frac{1}{ \frac{n}{2} - s } 
= \frac{1}{ s ( \frac{n}{2} - s ) }.\]
Then
\begin{eqnarray*}
b(0) 
&  =  & \int_{ B_{ r_{0} }(0) } G_{n,s}(0,y) \y \\
& \le & \frac{ \Gamma( \frac{n}{2} ) }{ 2^{2s} \pi^{1/n} \Gamma(s)^{2} } \frac{1}{ s ( \frac{n}{2} - s ) }
		\int_{ B_{ r_{0} }(0) } \frac{1}{ |y|^{ n - 2s } } \y \\
&  =  & \frac{ \Gamma( \frac{n}{2} ) }{ 2^{2s} \pi^{1/n} \Gamma(s)^{2} } \frac{1}{ s ( \frac{n}{2} - s ) }
		\int_{0}^{ r_{0} }  \frac{1}{ \rho^{ n - 2s } } {\cal H}^{n-1}( \partial B_{ \rho } ) \ d \rho \\
&  =  & \frac{ \Gamma( \frac{n}{2} ) }{ 2^{2s} \pi^{1/n} \Gamma(s)^{2} } \frac{1}{ s ( \frac{n}{2} - s ) }
		\frac{ 2 \pi^{n/2} }{ \Gamma( \frac{n}{2} ) }
		\int_{0}^{ r_{0} }  \frac{1}{ \rho^{ 1 - 2s } } \ d \rho \\	
&  =  & \frac{ 2 \pi^{n/2} }{ 2^{2s} \pi^{1/n} \Gamma(s)^{2} s ( \frac{n}{2} - s ) }
		\frac{ r_{0}^{2s} }{2s}.
\end{eqnarray*}
Therefore, 
\[ \min u = u(0) 
\ge - \frac{ \pi^{n/2} }{ 2^{2s} \pi^{1/n} \Gamma(s)^{2} s^{2} ( \frac{n}{2} - s ) } F_{1} r_{0}^{2s} . \]
%
\section{Decomposition} \label{sect:decomposition}
Positions of obstacles are random. 
In order to construct a supersolution, however, we must have some control on them.
Therefore, in this section we introduce a suitable decomposition of the space.
We will later see that by taking the right scaling
we will find in sufficiently many members of this decomposition obstacles that are strong enough.

Let us first decompose the base space $ \R^{n} $.
Let for each $ a = ( a_{1} , \ldots , a_{n} ) \in \Z^{n} $ be
\[ Q_{a} := \prod_{ i = 1 }^{n} 
\big[ a_{i} ( l + d ) - \tfrac{l}{2} + r_{1} , a_{i} ( l + d ) + \tfrac{l}{2} - r_{1} \big] \]
and
\[ \hat{Q}_{a} := \prod_{ i = 1 }^{n} \big[ a_{i} ( l + d ) - \tfrac{l}{2} , a_{i} ( l + d ) + \tfrac{l}{2} \big] \]
for some (still arbitrary) $ l > 2 r_{1} $ and $ d > 0 $, as depicted in Figure~\ref{fig:base}.
\begin{figure}
\begin{center}
\includegraphics[width=.5\textwidth]{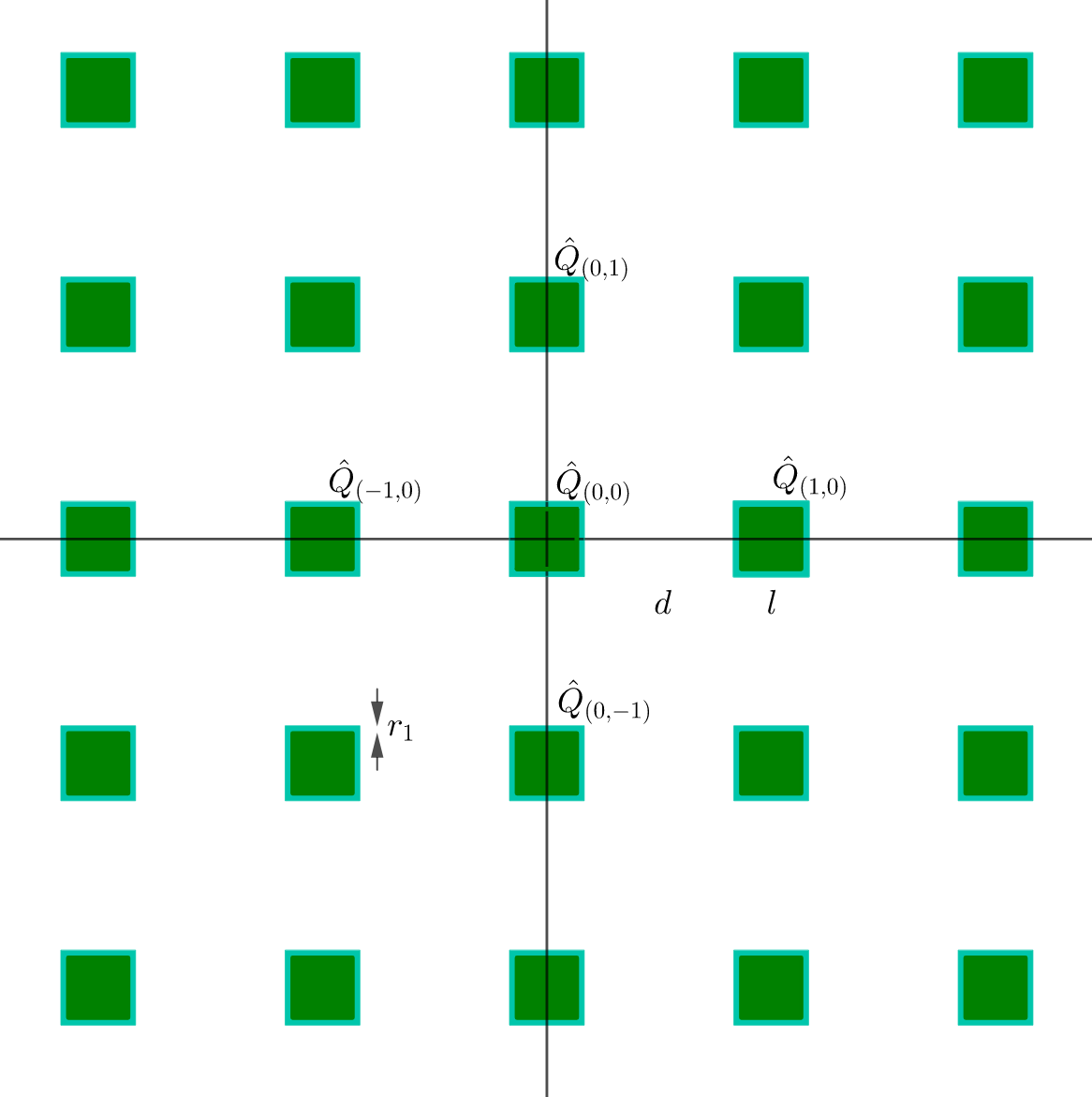}
\end{center}
\caption{Decomposition of the base space for $ n = 2 $} \label{fig:base}
\end{figure}

Let us choose arbitrary $ R \ge \tfrac{d}{2} + l - r_{1} $ and $ x_{a} \in Q_{a} $ for every $ a \in \Z^{n} $.
If $ F_{1} , F_{2} $ and $ 0 < s < 1 $ suffice the condition in Proposition~\ref{prop:local}
with $ u_{\rm local} $ being the corresponding solution, we define
\[ u_{\rm flat} : \R^{n} \to \R,
\quad
u_{\rm flat}(x) := \min_{ a \in \Z^{n} } u_{\rm local}( x - x_{a} ). \]
The points $ \{ x_{a} : a \in \Z^{n} \} $ induce a Voronoi diagram.
Since the function $ u_{\rm local} $ is radial, negative on $ B_{R}(0) $ and grows away from the origin, 
for every point $ x \in \R^{n} $ that lies in the interior of the Voronoi cell belonging to $ x_{a} $,
it holds
\[ u_{\rm flat}(x) = u_{\rm local}( x - x_{a} ) \]
and 
\begin{eqnarray*} 
- ( - \Laplace )^{s} u_{\rm flat}(x) 
&  =  & P.V. \int_{ \R^{n} } \frac{ u_{\rm flat}(x+z) - u_{\rm flat}(x) }{ |z|^{ n + 2 s } } \\
& \le & P.V. \int_{ \R^{n} } \frac{ u_{\rm local}( x + z - x_{a} ) - u_{\rm local}( x - x_{a} ) }{ |z|^{ n + 2 s } } \\
&  =  & - ( - \Laplace )^{s} u_{\rm local}( x - x_{a} ) \\
& \le &
\left\{ \begin{array}{cl} 
F_{1}, & \mbox{if } | x - x_{a} | < r_{0}, \\
- F_{2}, & \mbox{if } r_{0} < | x - x_{a} | < R. 
\end{array} \right. 
\end{eqnarray*}
On the boundary of Voronoi cells, the condition for viscosity solution is trivially fulfilled since
there is no $ C^{2} $-function that lies locally below $ u_{\rm flat} $ and touches its graph in these points.

We bear in mind that here we assumed the positions of obstacles to be $ ( x_{a} , u_{\rm local}(0) ) $.
We still must make sure to really find sufficiently many of them and to lift this function to their actual height
since now they are lying below $ \R^{n} \times \{0\} $.

Let us for fixed but still arbitrary $ h > 0 $ define cuboids
\[ Q_{a,j} := Q_{a} \times [ ( j - 1 ) h + r_{1} , j h + r_{1} ]
. \]
We chose these cuboids so that, if an obstacle lies in some $ Q_{a,j} $, 
then its entire force acts within $ \hat{Q}_{a} \times (0,\i) $.

Suppose at the beginning we have a slightly perturbed horizontal hyperplane. 
More precisely, let there be $ U \in C^{2}( \R^{n} ) $ such that
\[ U(x) = \nu \cdot x + r(x) \]
with 
\[ \nu \in \R^{n}, 
\quad 
\sup_{ x \in \R^{n} } | \nabla U(x) | =: \| \nabla U \|_{\i} < 1 
\quad \mbox{and} \quad 
\sup_{ x \in \R^{n} } | ( - \Laplace )^{s} r(x) | =: \| ( - \Laplace )^{s} U \|_{\i} < 1. \] 
We define a bijection
\[ {\cal U} : \R^{n} \times [ 0 , \i ) \to \{ (x,y) \in \R^{n} \times \R : y \ge U(x) \},
\quad 
{\cal U}(x,y) := ( x , y + U(x) ), \]
with the obvious inverse $ {\cal U}^{-1}(x,y) = ( x , y - U(x) ) $.
Thus  
\[ {\mathcal Q}_{a,j} := {\cal U} ( Q_{a,j} ), \quad a \in \Z^{n}, j \in \N \]
determine a decomposition of the half-space above the surface $ \{ (x,y) : y = U(x) \} $.
The two decompositions look as depicted in Figure~\ref{fig:f}.
\begin{figure}
\begin{center}
\includegraphics[width=.48\textwidth]{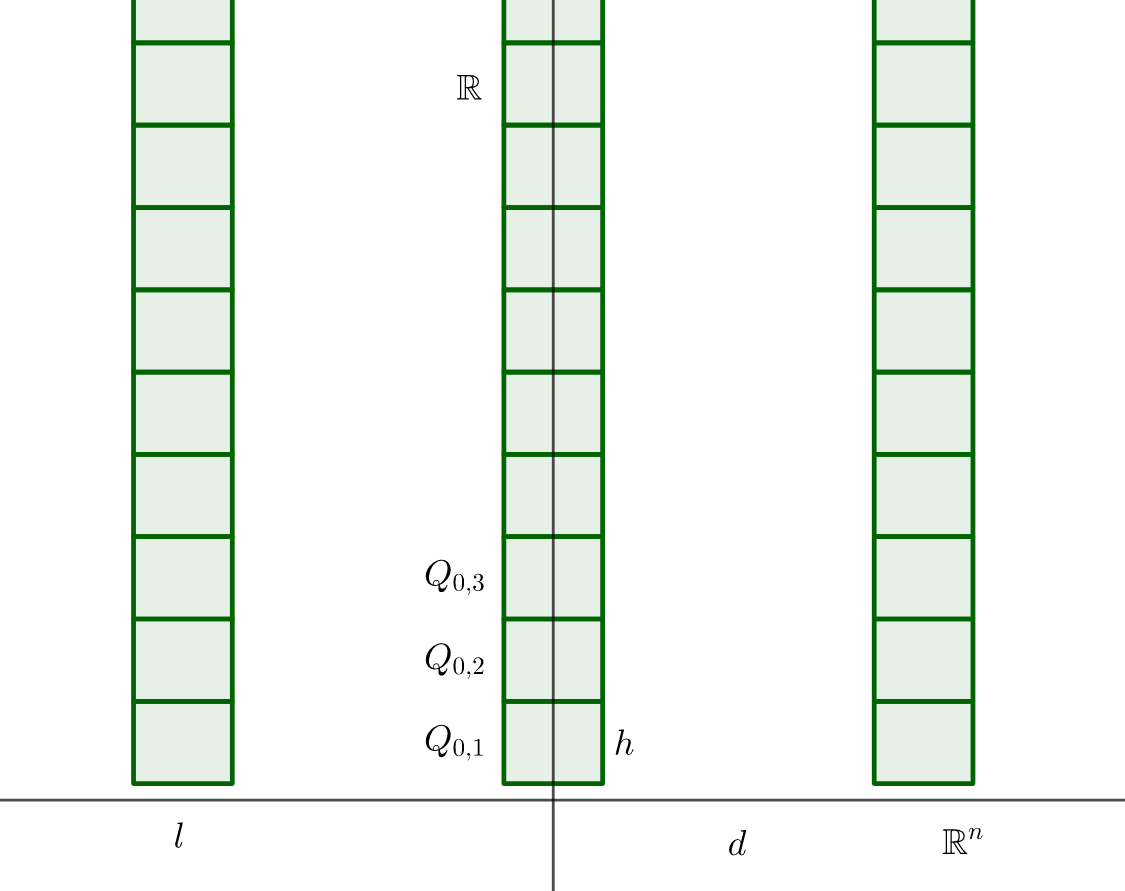}
\quad
\includegraphics[width=.48\textwidth]{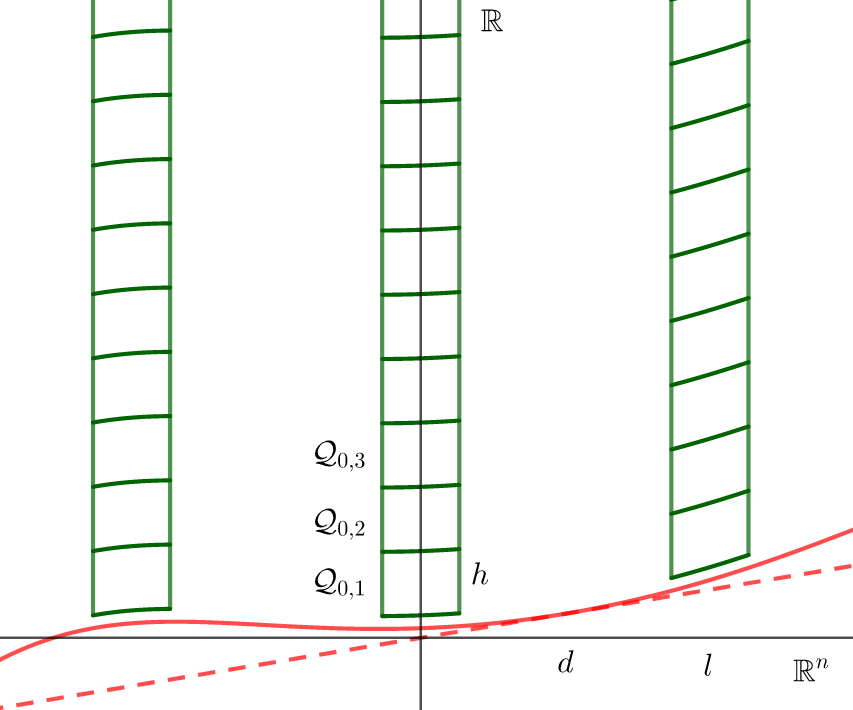}
\end{center}
\caption{Cross-sections of the decomposition of $ \R^{n+1} $ above the horizontal hyperplane (left) 
and slightly perturbed one (right) \label{fig:f}}
\end{figure}

Obviously, $ {\mathcal Q}_{a,j} $ and $ Q_{a,j} $ have volume $ ( l - 2 r_{1} )^{n} h $
and their orthogonal projection to $ \R^{n} \times \{ 0 \} $ is $ Q_{a} $.

Our idea, in order to simplify the contruction, is to regard the flat case and add the function $ U $ at the end.
However, by this ``flattening'' with $ {\cal U}^{-1} $, the obstacles get deformed. 
We suppose that the obstacles have full strength in $ (n+1) $-dimensional cubes with side $ 2 r_{0} $.
Let us take such an obstacle with center $ ( x_{0} , y_{0} ) $.
What is the height $ \eta $ of the cylinder with radius $ r_{0} $ 
centered at $ {\cal U}^{-1}( x_{0} , y_{0} ) $ so that its image under $ {\cal U} $ will lie in such an obstacle?
More precisely, when is 
\begin{multline*}
{\cal U} \Big( \{ ( x_{0} + \xi , y_{0} - U( x_{0} ) + \eta ) \in \R^{n} \times \R : 
| \xi | \le r_{0}, \ | \eta | \le \eta_{0} \} \Big) \subset \\
\subset \{ ( x , y ) \in \R^{n} \times \R : \| ( x , y ) - ( x_{0} , y_{0} ) \|_{\i} \le r_{0} \}? 
\end{multline*}
%
%
Since only the last component changes, for arbitrary $ | \xi | \le r_{0} $ and $ | \eta | \le \eta_{0} $, this reads as
\[ r_{0} 
\ge | y_{0} - U( x_{0} ) + \eta + U( x_{0} + \xi ) - y_{0} |
= | \eta + U( x_{0} + \xi ) - U( x_{0} ) |. \]
We have
\[ | U( x_{0} + \xi ) - U( x_{0} ) | 
\le \sup_{ x \in \R^{n} } | \nabla U(x) | \cdot | \xi |
\le \| \nabla U \|_{\i} r_{0}. \]
Thus this will surely be true if $ \eta \le ( 1 -  \| \nabla U \|_{\i} ) r_{0} $ \label{page:obstacle-height} since then
\[ | \eta + U( x_{0} + \xi ) - U( x_{0} ) |
\le | U( x_{0} + \xi ) - U( x_{0} ) | + | \eta | \le r_{0}. \]
%

\section{Percolation} \label{sect:percolation}
Now we adress the problem of finding obstacles with appropriate positions and sufficient strengths.
We choose any $ S > 0 $ with $ \mathbf{P}( f_{0} \ge S ) =: \mu_{S} > 0 $. 
Our goal is to get an array of obstacles such that
\begin{itemize}
\item 
for each $ a \in \Z^{n} $ there is an obstacle inside $ \hat{Q}_{a} \times \R $ above the graph of $U$
in order for local solutions in the definition of $ u_{\rm flat} $ to intersect in their negative regions,
\item 
their heights locally differ mildly so that, by lifting the function $ u_{\rm flat} $ to their positions,
we still can control its fractional Laplacian,
\item
their strength is at least $S$.
\end{itemize} 
The main tool to get such an array will be Theorem 2.1 from \cite{DondlScheutzowThrom}:
\begin{theo}
\label{theo:percolation}
Suppose $ z \in \Z^{n+1} $ is open with probability $ p \in (0,1) $ and closed otherwise, 
with different sites receiving independent states. 
The corresponding probability measure on the sample space $ \Omega = \{0, 1\}^{ \Z^{n+1} } $
is denoted by $ \mathbf{P}_{p} $.
For every nondecreasing function $ H : \N \to \N $ with
\[ \liminf_{ k \to \i } \frac{ H(k) }{ \log k } > 0, \] 
there exists $ p_{H} = p_{H}(n) \in (0,1) $ such that for every $ p \in ( p_{H} , 1 ) $
there exists a.s.~a (random) function $ y : \Z^{n} \to \N $ with the following properties:
\begin{itemize}
\item
For each $ a \in \Z^{n} $, the site $ ( a, y_{a} ) \in \Z^{n+1} $ is open.
\item
For any $ a , b \in \Z^{n} $, $ a \ne b $, it holds $ | y_{a} - y_{b} | \le H( \| a - b \|_{1} ) $.
\end{itemize}
Moreover, if we choose some $ p \in ( 1 , p_{H} ) $ 
and take the smallest function $ y $ with the above properties, there exists a constant $ C_{n,p,H} $
such that for all $ m \in \N \cup \{0\} $ 
\[ \mathbf{P}( y_{0} > m ) \le C_{n,p,H} \frac{ 2^{m} (1-p)^{m} }{ 2p-1 }. \] 
\end{theo}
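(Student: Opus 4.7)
My plan is to follow the standard approach for Lipschitz percolation results of this type, proceeding in two stages: first establishing existence of an admissible function, then proving the quantitative tail bound.

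For existence, I call $y : \Z^n \to \N$ admissible if $(a, y_a)$ is open for every $a$ and $|y_a - y_b| \le H(\|a-b\|_1)$ for every pair $a \ne b$. The class of admissible functions is closed under pointwise minima---the Lipschitz constraint, written as $y_a \le y_b + H(\|a-b\|_1)$, survives passage to infima---so if any admissible function exists then a pointwise smallest one does. To produce one I start with the naive candidate $y_a^{(0)} := \min\{k \in \N : (a, k) \text{ open}\}$, which satisfies the first condition but not the second, then iteratively raise each $y_a$ to the smallest open height above $\max_b(y_b - H(\|a-b\|_1))$ and pass to the monotone limit. For $p$ close to $1$ the limit is finite almost surely: the number of lattice points at $\ell^1$-distance $k$ from $a$ grows only polynomially in $k$, whereas the chance that $b$ at distance $k$ contributes a significant lift decays geometrically in $H(k)$, and the assumption $\liminf H(k)/\log k > 0$ together with $p$ sufficiently close to $1$ makes the sum over $b$ finite.

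For the tail estimate I run a Peierls-type argument on the minimal admissible $y$. The event $\{y_0 > m\}$ forces the existence of an obstruction chain: a finite sequence $0 = a_0, a_1, \ldots, a_\ell$ in $\Z^n$ with heights $m \ge k_0 > k_1 > \cdots > k_\ell \ge 0$ such that each $(a_i, k_i)$ is closed and $|k_i - k_{i+1}| \le H(\|a_i - a_{i+1}\|_1)$. Existence of such a chain is forced by minimality, since otherwise $y_0$ could be lowered. A prescribed chain of vertical extent $m$ is closed with probability $(1-p)^m$, and the catalogue of candidate chains of vertical extent $m$ has size at most $C_{n,p,H} \cdot 2^m$: here the hypothesis $H(k) \ge c \log k$ is crucial, as it allows horizontal moves of $\ell^1$-length $k$ to be charged against $H(k)$ units of vertical descent, capping the effective per-step branching at $2$. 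Summing the geometric series
\[
\sum_{m' \ge m} 2^{m'}(1-p)^{m'} = \frac{2^m(1-p)^m}{1-2(1-p)} = \frac{2^m(1-p)^m}{2p-1}
\]
(valid once $p > 1/2$, automatic by taking $p_H \ge 1/2$) then delivers the stated bound.

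The principal obstacle is the combinatorial bookkeeping in the second stage: one must define the obstruction chain canonically so that each realisation of $\{y_0 > m\}$ yields exactly one witness (otherwise the union bound overcounts), while simultaneously controlling the total number of candidate chains by $C \cdot 2^m$. Without the assumption $\liminf_{k \to \i} H(k)/\log k > 0$ the horizontal branching would outgrow the $(1-p)$-factor and the whole strategy breaks down. Once this geometric counting is established, admissibility of the pointwise infimum, almost-sure termination of the iterative construction, and the final geometric sum are all comparatively routine.
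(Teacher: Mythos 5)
The paper does not prove this statement: it is imported verbatim as Theorem~2.1 from the cited reference~\cite{DondlScheutzowThrom} and used as a black box in Section~\ref{sect:percolation}, so there is no in-paper proof against which to compare. What follows is therefore an assessment of your sketch on its own merits.

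The high-level plan is the right one, and the existence half (closure of the admissible class under pointwise minima, iterative raising, Borel--Cantelli finiteness for $p$ near $1$) is sound. The gap lies in the obstruction chain you use for the tail bound. You posit a chain $0=a_0,\ldots,a_\ell$ with strictly decreasing heights $m\ge k_0>\cdots>k_\ell\ge 0$, each $(a_i,k_i)$ closed, and $|k_i-k_{i+1}|\le H(\|a_i-a_{i+1}\|_1)$, and you then assert that a prescribed chain of ``vertical extent $m$'' is closed with probability $(1-p)^m$. But the chain contains only $\ell+1$ closed sites, and a single long horizontal jump (which is exactly what the Lipschitz constraint allows, and what your counting is meant to exploit) can consume many units of height while paying only one closed site, so $\ell+1$ can be far smaller than $m$. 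The per-chain probability is then $(1-p)^{\ell+1}$, not $(1-p)^m$, and the union bound as written does not deliver the claimed estimate. The repair is precisely the step you flag as the principal obstacle: one must set up the blocking path so that each unit of vertical drop is paid for by a closed site (as for the $\lambda$-paths of Dirr--Dondl--Grimmett--Holroyd--Scheutzow), then show that $\{y_0>m\}$ forces such a path of length at least $m$, and finally use the hypothesis $\liminf_k H(k)/\log k>0$ to cap the horizontal branching so that the number of such paths of length $\ell$ is at most $C\cdot 2^\ell$; the geometric tail sum over $\ell\ge m$ then yields the stated $2^m(1-p)^m/(2p-1)$. As a minor remark, the paper's ``$p\in(1,p_H)$'' in the second clause is evidently a typo for $p\in(p_H,1)$, which your sketch implicitly uses when invoking $p>1/2$.
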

In our case, the sides $ (a,j) \in \Z^{n} \times \N $ correspond to the cuboids $ {\mathcal Q}_{a,j} $.
We declare a side to be open if $ {\mathcal Q}_{a,j} $ contains the centre of an obstacle with strength at least $S$.
Having a Poisson point process, we know that the percolation result is applicable if
\[ 1 - \exp( - \lambda | {\mathcal Q}_{a,j} | \mu_{S} ) = 1 - \exp( - \lambda h ( l - 2 r_{1} )^{n} \mu_{S} ) > p_{H}. \]
Thus we arrive at another condition on the scaling, in this case on $l$ and $h$.
For $H$, we may take $ H(k) = \lfloor k^{ \alpha } \rfloor $ for any $ \alpha \in (0,1] $.  

\section{Lifting function} \label{sect:lifting}
As already announced, we also have to construct a suitable lifting function.
For that purpose, we adapt Proposition 3.22 in \cite{DondlScheutzowThrom} to the higher dimensional case.
We stress that the statements and the idea of the proof are mutatis mutandis the same.
\begin{prop}
\label{prop:lifting-function}
Let $ h, d, l > 0 $ and $ s \in (0,1) $. 
For $ \Lambda : \Z^{n} \to \R $ such that
\[ | \Lambda(a) - \Lambda(b) | \le 2 h \| a - b \|_{1}^{ \alpha } \]
with $ 0 < \alpha < 2 s $, 
there exist a smooth function $ u_{\rm lift} : \R^{n} \to \R $ 
and constants $ C_{0}, C_{1}, C_{2} $ depending only on $ n, s , \alpha $ such that:
\begin{itemize}
\item
$ u_{ \rm lift }(x) = \Lambda(a) $ if $ x \in \hat{Q}_{a} $ for some $ a \in \Z^{n} $,
\item
$ \| D^{2} u_{ \rm lift } \|_{ L^{\i} } \le C_{0} \frac{h}{ d^{2} } $,
\item
$ | ( - \Laplace )^{s} u_{ \rm lift }(x) |
\le C_{1} ( d + l )^{ 2 - 2 s } \frac{h}{ d^{2} } 
+ C_{2} \frac{h}{ ( d + l )^{ 2 s } } $.
\end{itemize}
\end{prop}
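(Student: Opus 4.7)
The plan is to construct $u_{\rm lift}$ by interpolating between the values $\Lambda(a)$ via a smooth partition of unity subordinate to the lattice of inflated cubes. First I fix a smooth cutoff $\eta : \R \to [0,1]$ which equals $1$ on $[-l/2, l/2]$, is supported in $[-l/2 - d, l/2 + d]$, and satisfies $\sum_{k \in \Z} \eta(t - k(l+d)) \equiv 1$ on $\R$, with $\|\eta'\|_{\infty} \le C/d$ and $\|\eta''\|_{\infty} \le C/d^2$ (a standard mollification of the indicator of $[-l/2, l/2]$ does the job). The tensor product $\psi_a(x) := \prod_{i=1}^{n} \eta(x_i - a_i(l+d))$ then gives a smooth partition of unity with $\psi_a \equiv 1$ on $\hat{Q}_a$, $\sum_{a} \psi_a \equiv 1$, and at most $2^n$ indices contributing at any given point. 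I set
\[
u_{\rm lift}(x) := \sum_{a \in \Z^n} \Lambda(a)\, \psi_a(x),
\]
which is smooth and satisfies property (i) since $\psi_a = 1$ on $\hat{Q}_a$ forces all other $\psi_b$ to vanish there.

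For the Hessian bound, fix $x$ and any $a_\ast$ with $\psi_{a_\ast}(x) > 0$. Since $\sum_a \psi_a \equiv 1$ implies $\sum_a D^2 \psi_a \equiv 0$, I may write
\[
D^2 u_{\rm lift}(x) = \sum_{a:\, \psi_a(x) > 0} \bigl( \Lambda(a) - \Lambda(a_\ast) \bigr)\, D^2 \psi_a(x).
\]
Only $O(1)$ terms contribute, each with $\|a - a_\ast\|_1 \le n$, so that $|\Lambda(a) - \Lambda(a_\ast)| \le 2h\, n^{\alpha}$; the tensor-product structure gives $|D^2 \psi_a| \le C/d^2$. Multiplying these bounds proves (ii).

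For the fractional Laplacian I use the symmetric representation
\[
(-\Delta)^{s} u_{\rm lift}(x) = c_{n,s} \int_{\R^n} \frac{2 u_{\rm lift}(x) - u_{\rm lift}(x+z) - u_{\rm lift}(x-z)}{|z|^{n+2s}} \, dz
\]
and split at $|z| = l+d$. On the near region I bound the numerator by $\|D^2 u_{\rm lift}\|_{\infty} |z|^2 \le C h|z|^2/d^2$; integrating against $|z|^{-n-2s}$ yields a contribution of order $(l+d)^{2-2s} h/d^2$, which supplies the first term $C_1$. On the far region I invoke the Hölder-type estimate
\[
|u_{\rm lift}(x) - u_{\rm lift}(y)| \le C h \bigl(|x-y|/(l+d)\bigr)^{\alpha} \quad \text{for } |x-y| \ge l+d,
\]
obtained by a short telescoping argument: $u_{\rm lift}$ at any point is a convex combination of $\Lambda$-values at indices within distance $O(1)$ (in $\|\cdot\|_1$) of the nearest lattice site, and the hypothesis $|\Lambda(a) - \Lambda(b)| \le 2h\|a-b\|_1^{\alpha}$ then controls the difference between the two cubes. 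The far-field integral becomes $Ch(l+d)^{-\alpha}\int_{l+d}^{\infty} \rho^{\alpha - 2s - 1} d\rho$, which converges exactly because $\alpha < 2s$ and evaluates to $C_2 h / (l+d)^{2s}$.

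The main technical point is the global Hölder estimate on $u_{\rm lift}$ at scale $l+d$: one must verify that the partition-of-unity averaging does not destroy the regularity inherited from $\Lambda$. The exponent constraint $\alpha < 2s$ enters in exactly one place, as the condition for integrability of the far-field tail, so it is sharp for this construction.
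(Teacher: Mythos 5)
Your proof is correct and takes essentially the same approach as the paper's. The paper builds $u_{\rm lift}$ as a $\tfrac{d}{2}$-mollification of a piecewise constant extension of $\Lambda$ rather than via an explicit smooth partition of unity, but this is only a cosmetic difference (the mollification implicitly generates such a partition, with the same cancellation $\sum_a D^2\psi_a \equiv 0$), and the core of the argument — split the fractional Laplacian integral at scale $l+d$, control the near region with the Hessian bound and the far region with the $\alpha$-H\"older estimate, with convergence supplied by $\alpha < 2s$ — is identical.
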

\begin{proof}
We define $ u_{\rm lift} : \R^{n} \to \R $ 
as $ \frac{d}{2} $-mollification of the piecewise constant rescaled extension of $ \Lambda $ namely
\[ \tilde{ \Lambda }(x) 
:= \Lambda \left( \left\lfloor \frac{ x_{1} }{ l+d } + \frac{1}{2} \right\rfloor , \ldots , 
\left\lfloor \frac{ x_{n} }{ l+d } + \frac{1}{2} \right\rfloor \right) \]
Hence, $ u_{\rm lift} := \eta_{d/2} * \tilde{ \Lambda } $. 
The first two properties then follow from the standard mollification results.
As for the third, consider without loss of generality 
that $ x $ belongs to the cube with center at the origin and with side $ d + l $, 
hence $ x \in \tilde{Q} := [ - \frac{ d + l }{2} , \frac{ d + l }{2} ]^{n} = \frac{d+l}{l} \hat{Q}_{0} $. 
According to Lemma 3.2 from \cite{DPV} and Proposition A.12 from \cite{Throm}, 
for smooth function with sublinear growth, we may write
\[ ( - \Laplace )^{s} u_{ \rm lift }(x) 
= - \frac{C}{2} \int_{ \R^{n} } 
\frac{ u_{ \rm lift }(x+y) + u_{ \rm lift }(x-y) - 2 u_{ \rm lift }(x) } { |y|^{n+2s} } \y. \]
We split the integration domain into $ 3 \tilde{Q} $ and $ \R^{n} \setminus 3 \tilde{Q} $.
\item
For all $ y \not\in 3 \tilde{Q} $, it holds
\begin{eqnarray*}
| u_{ \rm lift }(x+y) - u_{ \rm lift }(x) | 
& \le & | u_{ \rm lift }(x+y) - \tilde{ \Lambda }(x+y) | 
		+ | \tilde{ \Lambda }(x+y) - \tilde{ \Lambda }(x) | 
		+ | \tilde{ \Lambda }(x) - u_{ \rm lift }(x) | \\
& \le & ( n + 1 ) h + 2 h \left( n + \frac{ | y | }{ l + d } \right)^{ \alpha } + ( n + 1 ) h \\
& \le & 4 ( n + 1 ) h \left( \frac{ | y | }{ l + d } \right)^{ \alpha }.
\end{eqnarray*}
Thus,
\begin{eqnarray*}
\int_{ \R^{n} \setminus 3 \tilde{Q} } 
\frac{ | u_{ \rm lift }(x+y) + u_{ \rm lift }(x-y) - 2 u_{ \rm lift }(x) | } { |y|^{n+2s} } \y
& \le & \int_{ \R^{n} \setminus 3 \tilde{Q} } 
		2 \frac{ 4 ( n + 1 ) h\left( \frac{ | y | }{ l + d } \right)^{ \alpha } } { |y|^{n+2s} } \y \\
&  =  & \int_{ \R^{n} \setminus 3 \tilde{Q} } 
		8 ( n + 1 ) h \frac{ | y |^{ \alpha - n - 2s } } { ( l + d )^{ \alpha } } \y \\
& \le & \int_{ \R^{n} \setminus B_{ 3( l + d )/2 }  } 
		8 ( n + 1 ) h \frac{ | y |^{ \alpha - n - 2s } } { ( l + d )^{ \alpha } } \y,
\end{eqnarray*}
and
\begin{eqnarray*}
\int_{ \R^{n} \setminus B_{ 3( l + d )/2 }  } | y |^{ \alpha - n - 2s } \y 
&  =  & \int_{ 3( l + d )/2  }^{\i}
		\rho^{ \alpha - n - 2s } {\cal H}^{n-1}( \partial B_{ \rho } ) \ d \rho  \\
&  =  & \frac{ 2 \pi^{n/2} }{ \Gamma( \frac{n}{2} ) }
		\int_{ 3( l + d )/2  }^{\i} \rho^{ \alpha - 1 - 2s } \ d \rho  \\
&  =  & \frac{ 2 \pi^{n/2} }{ \Gamma( \frac{n}{2} ) }
		\frac{ ( \frac{3}{2}( l + d ) )^{ \alpha - 2 s } }{ 2 s - \alpha }.
\end{eqnarray*}
For the remaining part, we assess
\begin{eqnarray*}
\int_{ 3 \tilde{Q} } \frac{ | u_{ \rm lift }(x+y) + u_{ \rm lift }(x-y) - 2 u_{ \rm lift }(x) | } { |y|^{n+2s} } \y
& \le & \int_{ 3 \tilde{Q} } 
		2 \frac{ \| D^{2} u_{ \rm lift } \|_{ L^{\i} } |y|^{2} } { |y|^{n+2s} } \y \\
& \le & \int_{ B_{ 3 \sqrt{n} ( l + d )/2 } } 
		2 \| D^{2} u_{ \rm lift } \|_{ L^{\i} } |y|^{2-n-2s} \y,
\end{eqnarray*}
and
\begin{eqnarray*}
\int_{ B_{ 3 \sqrt{n} ( l + d )/2 } } |y|^{2-n-2s} \y 
&  =  & \int_{0}^{ 3 \sqrt{n} ( l + d )/2 } \rho^{2-n-2s} {\cal H}^{n-1}( \partial B_{ \rho } ) \ d \rho \\ 
&  =  & \frac{ 2 \pi^{n/2} }{ \Gamma( \frac{n}{2} ) } \int_{0}^{ 3 \sqrt{n} ( l + d )/2 } \rho^{1-2s} \ d \rho \\ 
&  =  & \frac{ 2 \pi^{n/2} }{ \Gamma( \frac{n}{2} ) } \frac{ ( \frac{ 3 \sqrt{n} }{2}( l + d ) )^{2-2s} }{ 2 - 2s }.
\end{eqnarray*}
Hence,
\begin{eqnarray*}
| ( - \Laplace )^{s} u_{ \rm lift }(x) |
& \le & \frac{C}{2} 
	 	\left( 2 \| D^{2} u_{ \rm lift } \|_{ L^{\i} } \frac{ 2 \pi^{n/2} }{ \Gamma( \frac{n}{2} ) } 
				\frac{ ( \frac{ 3 \sqrt{n} }{2}( l + d ) )^{2-2s} }{ 2 - 2s }
				+ \frac{ 8 ( n + 1 ) h } { ( l + d )^{ \alpha } } \frac{ 2 \pi^{n/2} }{ \Gamma( \frac{n}{2} ) }
				\frac{ ( \frac{3}{2}( l + d ) )^{ \alpha - 2 s } }{ 2 s - \alpha } \right) \\
& \le & C \left( C_{0} \frac{h}{ d^{2} } \frac{ 2 \pi^{n/2} }{ \Gamma( \frac{n}{2} ) } 
				\frac{ ( \frac{ 3 \sqrt{n} }{2} )^{2-2s} }{ 2 - 2s } ( l + d )^{2-2s}
				+ \frac{ 8 ( n + 1 ) \pi^{n/2} }{ \Gamma( \frac{n}{2} ) } \frac{ ( \frac{3}{2} )^{ \alpha - 2 s } }{ 2 s - \alpha }
	 			\frac{ h } { ( l + d )^{ 2 s } } \right).
\end{eqnarray*}
Therefore, we may choose
\[ C_{1} := C C_{0} \frac{ 2 \pi^{n/2} }{ \Gamma( \frac{n}{2} ) } \frac{ ( \frac{ 3 \sqrt{n} }{2} )^{2-2s} }{ 2 - 2s },
\quad
C_{2} := C \frac{ 8 ( n + 1 ) \pi^{n/2} }{ \Gamma( \frac{n}{2} ) } 
		\frac{ ( \frac{3}{2} )^{ \alpha - 2 s } }{ 2 s - \alpha }. \]
\end{proof}
\section{Scaling} \label{sect:scaling}
\begin{trivlist}
\item
Our supersolution will be
\[ u(x) := u_{\rm flat}(x) + u_{\rm lift}(x) + U(x). \] 
However, we still have many parameters to choose, namely $ q , F_{1} , F_{2} , S , l , d , h , \alpha $ and function $H$,
in such a way that they suffice the conditions derived in previous sections. 
We will suppose $ \| \nabla U \|_{\i} < 1 $ and derive a condition for $ \| ( - \Laplace )^{s} U \|_{\i} $.
\item
First we fix
\begin{itemize}
\item any $ S > 0 $ such that $ \mathbf{P}( f_{0} \ge S ) = \mu_{S} > 0 $,
\item an $ \alpha \in (0,1] $ with $ \alpha < 2s $ (e.g.~$ \alpha = s $),
\item the function $ H(k) := \lfloor k^{\alpha} \rfloor $.
\end{itemize}
Let $ p_{\alpha} $ be the probability $ p_{H} $ for the function $H$.
As we have already seen, we must take such $ h , l > 0 $ that
\[ 1 - \exp( - \lambda h ( l - 2 r_{1} )^{n} \mu_{S} ) > p_{ \alpha } \]
or 
\begin{equation}
\boxed{ h ( l - 2 r_{1} )^{n} > - \frac{1}{ \lambda \mu_{S} } \log ( 1 - p_{ \alpha } ) } 
\label{eq:1}
\end{equation}  
\begin{itemize}
\item We choose  $ d := l := \frac{r_{0}}{ 2 q \sqrt{n} }  $, and thus $ R = 2 l \sqrt{n} = \frac{1}{q} r_{0} $.
\end{itemize}
We must make sure that the solution does not ``fall out'' of an obstacle.
As we derived on the page~\pageref{page:obstacle-height},
it must therefore hold $ | u_{ \rm local } | < r_{0} ( 1 - \| \nabla U \|_{\i} ) $. 
According to the estimate after Proposition~\ref{prop:local}, this will hold if 
\begin{equation} 
\boxed{ \frac{ \pi^{n/2} }{ 2^{2s} \pi^{1/n} \Gamma(s)^{2} s^{2} ( \frac{n}{2} - s ) } F_{1} r_{0}^{2s} 
\le r_{0} ( 1 - \| \nabla U \|_{\i} ) }
\label{eq:2}
\end{equation} 
The local solutions are according to Proposition~\ref{prop:local} non-positive and radially increasing, 
which is needed so that their minimum is still a supersolution,
if
\begin{equation}
\boxed{ \frac{ F_{1} + F_{2} }{ F_{2} } 
\ge \frac{n}{2s} \frac{ ( 1 + q )^{n} }{ ( 1 - q^{2} )^{s} } \frac{1}{ q^{n} } }
\label{eq:3}
\end{equation} 
Lastly, we have to lift the flat supersolution to the obstacles. 
Suppose we spend $ F^{*} := \frac{1}{2} \min \{ S - F_{1} , F_{2} \} $ (with $ F_{1} , F_{2} $ yet to be chosen) on it.
Then we must achieve
\[ | ( - \Laplace )^{s} u_{ \rm lift } + ( - \Laplace )^{s} U |
\le F^{*}.\]
According to Proposition~\ref{prop:lifting-function}, this will surely hold if
\begin{equation}
\boxed{ C_{1} ( d + l )^{ 2 - 2 s } \frac{h}{ d^{2} } 
+ C_{2} \frac{h}{ ( d + l )^{ 2 s } } + \| ( - \Laplace )^{s} U \|_{\i}
\le \frac{1}{2} \min \{ S - F_{1} , F_{2} \} }
\label{eq:4} 
\end{equation} 
We must determine whether we may simultaneously fulfil the inequalities~(\ref{eq:1})-(\ref{eq:4}). 
Let us therefore simplify them.
Since we will choose $ l > 4 r_{1} $ (and therefore $ l - 2 r_{1} > \frac{l}{2} $), the first will be fulfilled if 
\[ h l^{n} \ge - \frac{ 2^{n}  }{ \lambda \mu_{S} } \log ( 1 - p_{ \alpha } ). \]
Employing $ 2 l \sqrt{n} = \frac{ r_{0} }{q} $, we arrive at
\begin{equation}
\frac{ h }{ q^{n} } > \frac{ A_{1} }{ \lambda \mu_{S} }  
\quad \mbox{with} \quad
A_{1} := - \frac{ 2^{ 2n } \sqrt{n}^{n} }{ r_{0}^{n} } \log ( 1 - p_{\alpha} ).
\label{eq:1-1} 
\end{equation} 
For the second, we must get
\begin{equation}
F_{1} \le A_{2} ( 1 - \| \nabla U \|_{\i} ) \quad \mbox{with} \quad
A_{2} 
:= r_{0}^{1-2s} 
\frac{ 2^{2s} \pi^{1/n} \Gamma(s)^{2} s^{2} ( \frac{n}{2} - s ) }{ \pi^{n/2} }. 
\label{eq:2-1} 
\end{equation}
Since $ q = \frac{ r_{0} }{ 2 l \sqrt{n} } \le \frac{ r_{0} }{ 8 r_{1} \sqrt{n} } \le \frac{1}{8n} $,
the third will surely be fulfilled if
\begin{equation}
\frac{ F_{1} }{ F_{2} } q^{n}
\ge A_{3} := \frac{n}{2s} \left( \frac{9}{8} \right)^{n} \frac{64}{63}. 
\label{eq:3-1} 
\end{equation}
As for the last, it should hold
\[ ( 4 C_{1} + C_{2} ) \frac{h}{ ( 2 l )^{2s} } + \| ( - \Laplace )^{s} U \|_{\i}  
\le \frac{1}{2} \min \{ S - F_{1} , F_{2} \}, \]
or, with $ A_{4} := \frac{ 4 C_{1} + C_{2} }{ r_{0}^{2s} } n^{s} $, 
\begin{equation}
A_{4} h q^{2s} + \| ( - \Laplace )^{s} U \|_{\i} 
\le \frac{1}{2} \min \{ S - F_{1} , F_{2} \}. 
\label{eq:4-1} 
\end{equation}
The question is if there exist such $ q \in (0,1) $ and $ F_{1} , F_{2} , h > 0 $.
Let $q$ be free and set 
\begin{itemize}
\item
$ F_{1} := \min \{ A_{2} ( 1 - \| \nabla U \|_{\i} ) , \tfrac{S}{2} \}, $
\item
$ F_{2} := \tfrac{ \min \{ A_{2} ( 1 - \| \nabla U \|_{\i} ) , \tfrac{S}{2} \} }{ A_{3} } q^{n}. $
\end{itemize}
Thus, the inequalities~(\ref{eq:2-1})~and~(\ref{eq:3-1}) are fulfilled.
Obviously, since $ q < \frac{1}{2} $
\[ \min \{ S - F_{1} , F_{2} \} = F_{2} 
= \tfrac{ \min \{ A_{2} ( 1 - \| \nabla U \|_{\i} ) , \tfrac{S}{2} \} }{ A_{3} } q^{n}. \]
If we suppose that in (\ref{eq:4-1}) every summand contributes up to one half of the upper bound,
then the inequalities~(\ref{eq:1-1})~and~(\ref{eq:4-1}) will be fulfilled if 
\[ \frac{ h }{ q^{n} } \ge \frac{ A_{1} }{ \lambda \mu_{S} },
\quad
\frac{ h }{ q^{n} } q^{2s} 
\le \frac{ \min \{ A_{2} ( 1 - \| \nabla U \|_{\i} ) , \tfrac{S}{2} \} }{ 4 A_{3} A_{4} },
\quad
\| ( - \Laplace )^{s} U \|_{\i} 
\le \frac{ \min \{ A_{2} ( 1 - \| \nabla U \|_{\i} ) , \tfrac{S}{2} \} }{ 4 A_{3} } q^{n}. \]
Therefore, we set 
\begin{itemize}
\item 
$ {\ds h := \frac{ A_{1} }{ \lambda \mu_{S} } q^{n}. } $
\end{itemize}
It must then hold
\[ \frac{ A_{1} }{ \lambda \mu_{S} } q^{2s}
\le \frac{ \min \{ A_{2} ( 1 - \| \nabla U \|_{\i} ) , \tfrac{S}{2} \} }{ 4 A_{3} A_{4} }. \]
It is always possible to achieve this by setting
\begin{itemize}
\item 
$ {\ds q := \left(
\frac{ \min \{ A_{2} ( 1 - \| \nabla U \|_{\i} ) , \tfrac{S}{2} \} }{ 4 A_{1} A_{3} A_{4} } \lambda \mu_{S}
\right)^{1/2s}. } $
\end{itemize}
(We must pay attention also to $ q < \frac{ r_{0} }{ 8 r_{1} \sqrt{n} } $.)
Now, we were able to choose all the parameters such that they suffice the inequalities~(\ref{eq:1})-(\ref{eq:4})
if we additionally suppose
\[ \| ( - \Laplace )^{s} U \|_{\i} 
\le \left( 
\frac{ \min \{ A_{2} ( 1 - \| \nabla U \|_{\i} ) , \tfrac{S}{2} \} }{ 4 A_{3} } 
\right)^{ 1 + \frac{n}{2s} }
\left( \frac{ \lambda \mu_{S} }{ A_{1} A_{4} } \right)^{ \frac{n}{2s} }. \]
Hence, there exists a constant $ A_{0} $ depending on $ n , s , \alpha, r_{0} $ and $ r_{1} $ such that the condition reads
\[ \| ( - \Laplace )^{s} U \|_{\i} 
\le A_{0}  \left( \min \{ A_{2} ( 1 - \| \nabla U \|_{\i} ) , \tfrac{S}{2} \} \right)^{ 1 + \frac{n}{2s} }
\left( \lambda \mu_{S} \right)^{ \frac{n}{2s} }. \]
Let us write a stronger though simpler condition.
First, we may demand instead
\[ \| ( - \Laplace )^{s} U \|_{\i} 
\le A_{0}  \left( \min \{ A_{2} , \tfrac{S}{2} \} \right)^{ 1 + \frac{n}{2s} }
\left( \lambda \mu_{S} \right)^{ \frac{n}{2s} } ( 1 - \| \nabla U \|_{\i} )^{ 1 + \frac{n}{2s} } . \]
We choose $S$ that maximizes the expression on the right (and $ \alpha = s $). 
Thus, there exists a constant $ C = C( n , s , r_{0} , r_{1} , \lambda , f_{0} ) $ such that if
\[ \| ( - \Laplace )^{s} U \|_{\i} 
\le C ( 1 - \| \nabla U \|_{\i} )^{ 1 + \frac{n}{2s} }, \]
then the pinning occurs.
\begin{theo}
\label{theo:main-finite} 
Let us have a random field of obstacles as described in Assumption~\ref{ass:1}.
There exist a constant $ C = C( n , s , r_{0} , r_{1} , \lambda , f_{0} ) $
such that for any $ U \in C^{2}( \R^{n} ) $ of the form 
\[ U(x) = \nu \cdot x + r(x) \]
with $ \nu \in \R^{n} $ and $ r : \R^{n} \to \R $ sublinear it holds:
If  $ \| \nabla U \|_{\i} < 1 $ and 
\[ \| ( - \Laplace )^{s} U \|_{\i} 
\le C ( 1 - \| \nabla U \|_{\i} )^{ 1 + \frac{n}{2s} }, \]
then there exists $ F^{*} > \| ( - \Laplace )^{s} U \|_{\i} $ such that 
there exists a.s.~a viscosity supersolution
$ v : \R^{n} \times \Omega \to \R $ to
\[ - ( - \Laplace )^{ s } v( x , \omega ) - f( x , u( x , \omega ) , \omega ) + F^{*} \le 0 
\quad \mbox{and} \quad
v( x , \omega ) > U(x). \]
\end{theo}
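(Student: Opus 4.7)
The plan is to assemble the building blocks from Sections~\ref{sect:LFS}--\ref{sect:lifting} via the scaling carried out in Section~\ref{sect:scaling}. I would first fix the parameters $q, S, F_1, F_2, l, d, h, \alpha$ and the function $H$ exactly as prescribed above; the hypothesis $\|(-\Laplace)^s U\|_\infty \le C(1 - \|\nabla U\|_\infty)^{1 + n/(2s)}$ is precisely what allows inequalities~(\ref{eq:1})--(\ref{eq:4}) to hold simultaneously.

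Second, I would apply Theorem~\ref{theo:percolation} with the site $(a,j)\in\Z^n\times\N$ declared open when the cuboid ${\mathcal Q}_{a,j}$ contains the centre of an obstacle of strength at least $S$. The scaling~(\ref{eq:1}) ensures the open probability exceeds $p_H$, so almost surely one obtains $y\colon\Z^n\to\N$ with each $(a,y_a)$ open and $|y_a-y_b|\le\lfloor\|a-b\|_1^\alpha\rfloor$. Let $(x_a^*,y_a^*)$ denote the centre of the obstacle supplied at site $(a,y_a)$. Via the bijection ${\cal U}^{-1}$ from Section~\ref{sect:decomposition}, I transfer the problem to a flat half-space; the final discussion of that section guarantees that after this transformation each obstacle continues to act with full strength on a cylinder of radius $r_0$ and half-height $(1-\|\nabla U\|_\infty)r_0$ around $(x_a^*,\Lambda(a))$, where $\Lambda(a):=y_a^*-U(x_a^*)\ge r_1$.

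Third, I would define the three constituents of the supersolution. For the flat part, set $u_{\rm flat}(x):=\min_{a\in\Z^n}u_{\rm local}(x-x_a^*)$ as in Section~\ref{sect:decomposition}; by Proposition~\ref{prop:local} together with~(\ref{eq:3}) this is a viscosity supersolution on each Voronoi cell, with right-hand side $F_1$ inside the obstacle cylinder at $x_a^*$ and $-F_2$ outside, and with $|u_{\rm flat}|<r_0(1-\|\nabla U\|_\infty)$ by~(\ref{eq:2}). For the lift, since each obstacle sits in a cuboid of vertical extent $h$, the percolation bound yields $|\Lambda(a)-\Lambda(b)|\le 2h\|a-b\|_1^\alpha$ (combining the $h$-level rounding with $H(\|a-b\|_1)$), so Proposition~\ref{prop:lifting-function} produces a smooth $u_{\rm lift}$ with $u_{\rm lift}\equiv\Lambda(a)$ on $\hat{Q}_a$ and with $|(-\Laplace)^s u_{\rm lift}|$ controlled by~(\ref{eq:4}). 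Finally, set
\[ v(x):=u_{\rm flat}(x)+u_{\rm lift}(x)+U(x), \qquad F^*:=\tfrac{1}{2}\min\{S-F_1,F_2\}. \]

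It then remains to verify the two conclusions. The inequality $v>U$ follows from $u_{\rm lift}\ge r_1$ on each $\hat{Q}_a$ combined with the bound $|u_{\rm flat}|<r_1$ just noted. For the supersolution inequality, linearity of $(-\Laplace)^s$ together with~(\ref{eq:4}) gives inside a cylinder $-(-\Laplace)^s v + F^* \le F_1 + 2F^* \le S \le f(x,v)$, and outside $-(-\Laplace)^s v + F^* \le -F_2 + 2F^* \le 0 \le f(x,v)$; the Voronoi interfaces cause no difficulty since no $C^2$ function can touch $u_{\rm flat}$ from below there. The main delicacy lies in the change of coordinates: one must check both that the cylindrical footprint of $u_{\rm local}$ fits inside the deformed obstacle---this is exactly the role of the $(1-\|\nabla U\|_\infty)$ factor in~(\ref{eq:2})---and that the Hölder-type bound on $\Lambda$ survives the perturbation by $U$, which is guaranteed by $\|\nabla U\|_\infty<1$.
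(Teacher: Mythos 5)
Your proposal follows essentially the same route as the paper's own argument in Section~\ref{sect:scaling}: you assemble the supersolution as $v = u_{\rm flat} + u_{\rm lift} + U$ with the same parameter choices, invoke Theorem~\ref{theo:percolation} on the cuboids ${\mathcal Q}_{a,j}$ to locate sufficiently strong obstacles, define $\Lambda(a) = y_a^* - U(x_a^*)$ and feed it into Proposition~\ref{prop:lifting-function}, and use conditions~(\ref{eq:1})--(\ref{eq:4}) exactly as the paper derives them. The only meaningful difference is one of exposition rather than substance: you spell out the final viscosity-supersolution verification (inside the obstacle $-(-\Laplace)^s v + F^* \le F_1 + 2F^* \le S \le f$; outside $-(-\Laplace)^s v + F^* \le -F_2 + 2F^* \le 0 \le f$) and the $v > U$ check ($u_{\rm lift} \ge r_1 > r_0(1 - \|\nabla U\|_\infty) > |u_{\rm flat}|$), whereas the paper leaves these as implicit conclusions of the preceding sections. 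Two minor quibbles: your remark that the H\"older-type bound on $\Lambda$ "survives the perturbation by $U$ thanks to $\|\nabla U\|_\infty < 1$" is slightly off --- that bound is automatic because $\Lambda$ is defined through the transformed (flattened) cuboid index, not through $U$ directly; and the chain of inequalities yielding $|\Lambda(a) - \Lambda(b)| \le 2h\|a-b\|_1^\alpha$ relies on $\|a-b\|_1 \ge 1$ for $a \ne b$, which you use implicitly. Neither affects the correctness of the argument.
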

\item
We may additionally assess the distance of the supersolution to the initial function.
For the smallest $y$ in Theorem~\ref{theo:percolation}, it holds
\[ {\mathbf E}( y_{0} ) 
= \sum_{ m = 0 }^{\i} \mathbf{P}( y_{0} > m )
\le \sum_{ m = 0 }^{\i} C_{n,p,H} \frac{ 2^{m} (1-p)^{m} }{ 2p-1 }
= \frac{ C_{n,p,H} }{ ( 2p-1 )^{2} }
< \i. \]
(One could explicitly determine the constant $ C_{n,p,H} $.)
Therefore, there is a supersolution that additionaly for every $ x \in \R^{n} $ fulfils
\[ {\mathbf E}( v(x,\prostor) - U(x) ) \le M < \i \] 
with $ M = M( n , s , r_{0} , r_{1} , \lambda , f_{0} ) $.
\end{trivlist}
\section{Homogenization} \label{sect:hom}
Theorem~\ref{theo:main-finite} immediately yields a homogenization result for to the physically most interesting case $ s = \frac{1}{2} $.
Let us first explain the setting. 
We suppose that for every $ \eps > 0 $, we have a random field of obstacles that scales with $ \eps $.
More precisely, let the random function of the obstacle field be
\[ f^{\eps}( x , y , \omega ) := f( \tfrac{x}{\eps} , \tfrac{y}{\eps} , \omega ) \]
where $f$ is as in Assumption~\ref{ass:1}. 
For each $ \eps > 0 $, we explore the behaviour of same the interface given by a function $ u_{0} $ at time $ t = 0 $
that is determined by
\begin{eqnarray*}
\partial_{t} u^{\eps}(t,x, \omega ) & = & - (- \Delta )^{1/2} u^{\eps}(t,x, \omega ) - f^{\eps}(x, u^{\eps}(t,x), \omega ) + F, \\
u^{\eps}(0,x, \omega ) & = & u_{0}(x).
\end{eqnarray*} 
We wish to determine what happens for $ \eps \to 0 $. 
We look at the same random field from more and more large distance where, however,
the initial interface remains the same.
%
Clearly, for every $ \eps $ the obstacles have the maximal strength in cube of side $ 2 \eps r_{0} $ 
and are correspondingly more tightly distributed, 
namely, still according to a Poisson point process but now with parameter $ \frac{\lambda}{\eps^{n}} $.
%
%
Let us for each $ \eps > 0 $ consider existence of a viscosity supersolution $ v^{\eps} $  to
\[ - ( - \Laplace )^{1/2} v^{\eps}(x) - f^{\eps}( x , v^{\eps}(x) ) + F \le 0  
\quad \mbox{and} \quad
v^{\eps} > u_{0} \]
where $ u_{0} \in C^{2}( \R^{2} ) $ fulfils the assumptions of Theorem~\ref{theo:main-finite}. 
Let us rescale 
\[ v^{\eps}(x) =: \eps w^{\eps}( \tfrac{x}{ \eps } ). \]
Since
\[ \nabla v^{\eps}(x) = ( \nabla w^{\eps} ) ( \tfrac{x}{ \eps } )
\quad \mbox{and} \quad 
( - \Laplace )^{1/2} v^{\eps}(x) = ( ( - \Laplace )^{1/2} w^{\eps} ) ( \tfrac{x}{ \eps } ), \]
the new function $ w^{\eps} $ must fulfil for all $ x \in \R^{n} $
\[ - ( ( - \Laplace )^{1/2} w^{\eps} )(  \tfrac{x}{ \eps } ) 
- f^{\eps}( x , \eps w^{\eps}(  \tfrac{x}{ \eps } ) ) + F \le 0,  \]
i.e.~for every $ \xi \in \R^{n} $
\[ - ( - \Laplace )^{1/2} w^{\eps}( \xi ) - f( \xi , w^{\eps}( \xi ) ) + F \le 0  \]
with $ w^{\eps}( \xi ) > \frac{1}{\eps} u_{0} ( \eps \xi ) =: u_{0}^{\eps}( \xi ) $.
This is now the same inequality as in the previous sections.
Since 
\[ \| \nabla u_{0}^{\eps} \|_{\i} = \| \nabla u_{0} \|_{\i} 
\quad \mbox{and} \quad 
\| ( - \Laplace )^{1/2} u_{0}^{\eps} \|_{\i} = \| ( - \Laplace )^{1/2} u_{0} \|_{\i}, \]
we find by Theorem~\ref{theo:main-finite} for every scale $ \eps > 0 $ such a supersolution $ w^{\eps} $. 
Moreover, we may achieve
\[ {\mathbf E}( w^{\eps}(x,\prostor) - u_{0}^{\eps}(x) ) \le M < \i \] 
for all $ x \in \R^{n} $.
Hence, for every $ \eps > 0 $ there is a supersolution such that $ v^{\eps} > u_{0} $ and 
\[ {\mathbf E}( v^{\eps}( x , \prostor) - u_{0}(x) ) 
= {\mathbf E}( \eps w^{\eps}( \tfrac{x}{ \eps } , \prostor ) - \eps u_{0}(  \tfrac{x}{ \eps }  ) )
\le \eps M. \] 
Since the upper blocking interface converges in expectation towards the initial interface as $ \eps \to 0 $,
so does the solution. 
\begin{theo} \label{theo:homogenization}
Let us have a random field of obstacles fulfilling Assumptions~\ref{ass:1}.
There exist a constant $ C = C( n , s , r_{0} , r_{1} , \lambda , f_{0} ) $ 
and a force $ F^{*} = F^{*}( n , s , r_{0} , r_{1} , \lambda , f_{0} ) $
such that for any bounded $ u_{0} \in C^{2}( \R^{n} ) $ with
\begin{equation*}
\| \nabla u_{0} \|_{\i} \le C 
\quad \mbox{and} \quad 
\| ( - \Delta )^{s} u_{0} \|_{\i} \le C , 
\end{equation*} 
and for any $ F \le F^{*} $ the viscosity solutions
$ u^{\eps} : \R \times \R^{n} \times \Omega \to \R $ to
\begin{eqnarray*}
\partial_{t} u(t,x, \omega ) & = & - (- \Delta )^{1/2} u(t,x, \omega ) - f^{\eps}(x, u(t,x), \omega ) + F, \\
u(0,x, \omega ) & = & u_{0}(x),
\end{eqnarray*} 
converge to the initial value, i.e.~for all $ x \in {\mathbb R}^{n} $ and $ t \in {\mathbb R} $
\[ {\mathbf E}[ ( u^{\varepsilon}(t,x, \prostor ) - u_{0}(x) )_{+} ]
\le \eps M \to 0
\quad 
\mbox{as } \eps \to 0, \]
where $ (.)_{+} $ denotes the positive part.
\end{theo}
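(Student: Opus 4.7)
The plan is to reduce Theorem~\ref{theo:homogenization} to Theorem~\ref{theo:main-finite} via the scale invariance that is special to $s=\frac{1}{2}$, and then close the argument with the comparison principle for the evolution equation. The entire content of the theorem is, roughly, that for every $\eps$ the stationary supersolution constructed in Theorem~\ref{theo:main-finite} sits no more than $O(\eps)$ above the initial datum in expectation, and by comparison this same quantitative bound transfers to $u^\eps$.

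First I would, for each fixed $\eps>0$, look for a stationary random supersolution $v^\eps\colon\R^n\times\Omega\to\R$ to
\[ -(-\Laplace)^{1/2}v^\eps(x)-f^\eps(x,v^\eps(x),\omega)+F\le 0,\qquad v^\eps(x)>u_0(x), \]
rather than tackling the evolution problem directly. Setting $w^\eps(\xi):=\eps^{-1}v^\eps(\eps\xi)$ and $u_0^\eps(\xi):=\eps^{-1}u_0(\eps\xi)$, the degree-one homogeneity of $(-\Laplace)^{1/2}$ gives $(-\Laplace)^{1/2}v^\eps(x)=((-\Laplace)^{1/2}w^\eps)(x/\eps)$, while the definition of $f^\eps$ matches exactly so that the above inequality for $v^\eps$ is equivalent to the same inequality for $w^\eps$ against the unscaled obstacle field $f$, with $w^\eps>u_0^\eps$. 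The scaling is chosen precisely so the rescaled problem has the same structure as the one analyzed in Sections~\ref{sect:LFS}--\ref{sect:scaling}.

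Next I would apply Theorem~\ref{theo:main-finite} to the datum $u_0^\eps$. The key scale invariance identities
\[ \|\nabla u_0^\eps\|_\infty=\|\nabla u_0\|_\infty,\qquad \|(-\Laplace)^{1/2}u_0^\eps\|_\infty=\|(-\Laplace)^{1/2}u_0\|_\infty \]
ensure that the smallness assumptions imposed on $u_0$ in the statement (for a suitably small constant $C$ and force $F^*$ depending only on the deterministic data) are inherited by $u_0^\eps$ uniformly in $\eps$. Theorem~\ref{theo:main-finite} therefore produces, almost surely, a viscosity supersolution $w^\eps$ with $w^\eps>u_0^\eps$, and the exponential percolation tail recorded at the end of Section~\ref{sect:scaling} gives a uniform bound $\mathbf{E}(w^\eps(\xi,\cdot)-u_0^\eps(\xi))\le M<\infty$ for every $\xi$. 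Undoing the rescaling yields $v^\eps>u_0$ together with $\mathbf{E}(v^\eps(x,\cdot)-u_0(x))\le\eps M$ for every $x\in\R^n$.

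Finally, since $v^\eps$ is a stationary supersolution to the full parabolic equation and strictly dominates the initial datum $u_0=u^\eps(0,\cdot,\omega)$, the comparison principle for viscosity solutions of the evolution equation gives $u^\eps(t,x,\omega)\le v^\eps(x,\omega)$ for all $t\ge 0$ and almost every $\omega$. Consequently $(u^\eps(t,x,\omega)-u_0(x))_+\le v^\eps(x,\omega)-u_0(x)$, and taking expectations delivers the claimed bound $\mathbf{E}[(u^\eps(t,x,\cdot)-u_0(x))_+]\le\eps M$. The only real obstacle is verifying that the constants $C$ and $F^*$ from Theorem~\ref{theo:main-finite} can be chosen independently of $\eps$: this is precisely what the $s=\frac{1}{2}$ scale invariance above guarantees, and is the reason the argument breaks down for general $s$.
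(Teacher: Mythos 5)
Your proposal is correct and follows exactly the paper's own argument: rescale $v^\eps(x)=\eps w^\eps(x/\eps)$, exploit the degree-one homogeneity of $(-\Laplace)^{1/2}$ and the definition $f^\eps(x,y)=f(x/\eps,y/\eps)$ to reduce to the $\eps$-independent supersolution problem, invoke Theorem~\ref{theo:main-finite} (with the scale-invariant norms $\|\nabla u_0^\eps\|_\infty$ and $\|(-\Laplace)^{1/2}u_0^\eps\|_\infty$ ensuring uniformity in $\eps$), undo the rescaling to obtain $\mathbf{E}(v^\eps(x)-u_0(x))\le\eps M$, and close with the comparison principle. No gaps; this matches the paper's Section~\ref{sect:hom} step for step.
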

We stated the theorem above with the positive part 
since the initial data may induce in some parts of the interface motion downwards. We note that the physical situation would be that the obstacles require an additional force for the interface to pass over them independently of the direction, see, e.g.,~\cite{KCO}. This would correspond to an additional $L^1$-type dissipation located at the obstacle sites. However, for such models no proven comparison principles exist so far.

In our simpler model, we introduce only obstacles that exert  a downward force on the interface. For a bound on the interface from below, we would therefore be allowed  to assume that the obstacles lying below the interface act upwards.
The analogous analysis can be performed for $ - u $,  yielding also a bound for the negative part of same type.


\bibliography{refs}
\bibliographystyle{alphaabbr}

\end{document}